\definecolor{lightred}{rgb}{1,0.87,0.87}
\definecolor{lightgreen}{rgb}{0.8,1,0.8}
\definecolor{lightblue}{rgb}{0.8,0.8,1}
\theoremstyle{plain}
\newtheorem{theorem}{Theorem}
\newtheorem{corollary}[theorem]{Corollary}
\newtheorem{lemma}[theorem]{Lemma}
\newtheorem{proposition}[theorem]{Proposition}
\theoremstyle{definition}
\newtheorem{definition}[theorem]{Definition}
\newtheorem{remark}[theorem]{Remark}
\numberwithin{equation}{section}
\numberwithin{theorem}{section}
\newcommand{\M}{S_\mu}
\newcommand{\G}{{\mathcal{G}}}
\newcommand{\K}{{\mathcal K}}
\newcommand{\R}{\mathbb{R}}
\newcommand{\N}{\mathbb{N}}
\newcommand{\pa}{\partial}
\newcommand{\intd}{\,\mathrm{d}}
\newcommand{\Id}{{\mathbbm 1}}
\DeclareMathOperator{\supp}{supp}
\DeclareMathOperator{\e}{e}
\DeclareMathOperator{\morse}{m}
\DeclareMathOperator{\linspan}{span}
\DeclareMathOperator{\SO}{SO}
\DeclareMathOperator{\card}{card}
\newcommand{\dd}{\@ifstar{\dd@txt}{\dd@frac}}
\newcommand{\dd@frac}[2]{
  \frac{{\operator@font d}#1}{{\operator@font d}#2}}
\newcommand{\dd@txt}[2]{
  {\operator@font d}#1 / {\operator@font d}#2}
\newcommand{\incident}{\succ}
\newcommand{\edge}{{\mathrm e}} 
\newcommand{\vv}{{\mathrm{v}}} 
\newcommand{\FIXME}{\@ifstar{\FIXMEdel}{\FIXMEadd}}
\newcommand{\FIXMEadd}[1]{\textcolor{red}{#1}}
\definecolor{FIXMEdel}{gray}{0.7}
\newcommand{\FIXMEdel}[1]{\textcolor{FIXMEdel}{\small #1}}
\newcommand{\intervalcc}[1]{\mathopen[#1\mathclose]}
\newcommand{\intervalco}[1]{\mathopen[#1\mathclose)}
\newcommand{\intervaloc}[1]{\mathopen(#1\mathclose]}
\newcommand{\intervaloo}[1]{\mathopen(#1\mathclose)}
\newcommand{\bigintervaloo}[1]{\bigl(#1\bigr)}
\author{Pablo Carrillo\footnote{pablo.carrillo-martinez@univ-fcomte.fr}}  \affil{Universit\'e de Franche-Comt\'e, CNRS, LmB (UMR 6623), F-25000 Besan\c{c}on, France}
\author{Damien Galant \footnote{damien.galant@umons.ac.be}} \affil{ Université Polytechnique Hauts-de-France, INSA Hauts-de-France,
CERAMATHS - Laboratoire de Matériaux Céramiques et de Mathématiques, F-59313 Valenciennes, France
and Département de Mathématiques, Université de Mons, Place du Parc, 20, B-7000 Mons, Belgium
and F.R.S. - FNRS}
\author{Louis Jeanjean\footnote{louis.jeanjean@univ-fcomte.fr}}  \affil{Universit\'e de Franche-Comt\'e, CNRS, LmB (UMR 6623), F-25000 Besan\c{c}on, France}
\author{Christophe Troestler\footnote{christophe.troestler@umons.ac.be}}  \affil{Département de Mathématiques, Université de Mons, Place du Parc, 20, B-7000
Mons, Belgium}
\title{Infinitely many normalized solutions of $L^2$-supercritical NLS equations on noncompact metric graphs with localized nonlinearities}
\date{}
\begin{document}

\maketitle

\begin{abstract}
  We consider the existence of solutions for nonlinear Schr\"odinger equations on noncompact metric graphs with localized nonlinearities.  In the $L^2$-supercritical regime, we establish the existence of infinitely many solutions for any prescribed mass.
\end{abstract}

\medskip

{\small \noindent KeyWords:
  Nonlinear Schr\"odinger equations; $L^2$-supercritical;
  noncompact metric graph; infinitely many solutions.\\
  Mathematics Subject Classification: 35J60, 47J30}

\section{Introduction and main results}\label{intro}

Throughout the paper we assume that $\G$ is a noncompact
metric graph which satisfies:
\begin{equation}\label{HG}
  \G \text{ has a finite number of edges and vertices,
    a non trivial compact core } \K
  \text { and at least one half-line.}
\end{equation}
The notion of metric graph is detailed in~\cite{BeKu}.  We
  recall that if $\G$ is a metric graph with a finite number of edges
  and vertices, its compact core $\K$ is defined as the metric
  sub-graph of $\G$ consisting of all the bounded edges of $\G$
  (see~\cite{AdSeTi16, SeTeJDE16}). 

The paper is devoted to the existence of infinitely many solutions, sometimes called \textit{bound states}, of prescribed mass for the $L^2$-supercritical nonlinear Schr\"odinger (NLS) equation with localized nonlinearities on~$\G$
\begin{equation}\label{Eq: stationnaryNLS}
-u''+\lambda u=\kappa(x)|u|^{p-2}u,
\end{equation}
coupled with the Kirchhoff conditions at the vertices, see \eqref{1.2} below. Here $\lambda \in \R$ appears as a Lagrange multiplier, $p>6$, $\G$ satisfies \eqref{HG} and $\kappa$ is the characteristic function of the compact core $\K$ of $\G$.  

\medbreak
    There are several reasons coming from physics to consider Schr\"odinger equations on metric graphs.
    For instance, the so-called ``quantum graphs''
    (namely, metric graphs equipped
    with an Hamiltonian operator
    coupled with vertex conditions)
    have been introduced to model
    quantum systems having ``uni-dimensional features''.
    Works of Hückel \cite{Huckel} in the 1930s
    and then Ruedenberg-Scherr \cite{RueSch} in the 1950s
    show how the energy levels of some molecules
    correspond to the spectra
    of the Laplacian on metric graphs associated
    with the molecular structure.
    Nowadays, the study of quantum graphs
    is a vast and active field:
    we refer to~\cite{BeKu} and the references therein
    for an overview of this domain.
    
    Regarding \emph{nonlinear} Schrödinger equations
    on metric graphs,
    they have attracted much attention
    over the last few decades, as can be seen in the
    survey papers \cite{AdSeTi172,KaNoPe,No}.
    
    Remarkably, the study of NLS appears both
    in the study of \emph{matter-wave solitons}
    (as those appearing in Bose-Einstein condensates)
    and of \emph{optical solitons} (that can be realized
    in optical fibers, for instance).
    We refer to \cite[Preface]{KA}
    for a further discussion on the similarity
    between those two settings.
    In both cases, studying how the shape of underlying
    ``networks'' affects the solitary states
    is a very natural, and usually delicate, question.
    In nonlinear optics, one may create complex networks
    by connecting optical fibers.
    As for matter-wave solitons, their study in domains
    having a complex topology is closely
    related to the emerging field of \emph{atomtronics},
    which aims to realize circuits of ultracold matter
    exhibiting quantum effects. We do not attempt
    to provide more details about this fascinating subject
    here and refer to \cite{AmAnBoBrKwMivonKl}
    and to \cite[Section~1]{AdSeTi172}
    for further information.

    The localization of the nonlinearity
    appears when modeling a network made
    of optical fibers of two kinds, one kind having
    a much stronger nonlinear effect than the other.
    As a first approximation, one may thus consider
    that all fibers in the compact core have the same
    nonlinear effect and that all the remaining fibers
    do not have any nonlinear effect.
    From the point of view of physics,
    the richness of this model lies in the interplay
    between the nonlinearity and the diffusive
    effects (usually leading to scattering).
    We refer to~\cite{GnSmDe} (see also \cite{No, Te})
    for further discussion on these aspects.

\medbreak
Solutions to \eqref{Eq: stationnaryNLS} with prescribed mass, often referred to as {\textit{normalized solutions}}, correspond to critical points of the NLS energy functional $E(\cdot,\G)\colon H^1(\G)\to \R$ defined by
\begin{equation}\label{Eq: E(u)}
  E(u,\G) = \frac{1}{2}\int_\G|u'|^2 \intd x
  -\frac{1}{p} \int_\K |u|^p \intd x,
\end{equation}
under the corresponding mass constraint
\begin{equation}\label{Eq: mass constraint}
  \int_\G|u|^2 \intd x = \mu > 0.
\end{equation}
It is standard to show that $E(\cdot, \G)$ is of class $C^2$ on $H^1(\G)$.
Note that  solutions to \eqref{Eq: stationnaryNLS} provide standing waves of the time-dependent focusing NLS on $\G$,
\[
  \mathrm{i}\, \pa_t \psi(t,x)
  = -\partial_{xx} \psi(t,x)-\kappa(x)|\psi(t,x)|^{p-2} \psi(t,x),
\]
via the ansatz $\psi(t,x) = \e^{\mathrm{i} \lambda t}u(x)$. The
constraint \eqref{Eq: mass constraint}
is meaningful from a dynamics perspective as
the mass (or charge), as well as the energy,
is conserved by the NLS flow.
This constraint is also very natural from
the point of view of physics. For instance, when studying
Bose-Einstein condensates, the $L^2$-norm is related
to the quantity of matter inside the system under study
(see e.g.~\cite[Section~1]{AdSeTi172}).

Recently, much effort has been devoted to establish the
existence of normalized solutions of NLS on metric graphs, in the
$L^2$\emph{-subcritical} (i.e., $p\in(2,6)$) or $L^2$\emph{-critical
  regimes} (i.e., $p=6$). In these two regimes, the energy functional
$E(\cdot, \G)$ is bounded from below and coercive on the mass
constraint.  A relevant notion is then the one of ground states,
namely of solutions which minimize the energy functional on the
constraint. For the existence of ground state solutions,
the reader can consult
\cite{AdBoDo22,AdCaFiNo, AdSeTi15,AdSeTi16,AdSeTi17,NoPe,PiSo} for
noncompact graphs $\G$, and \cite{CaDoSe, Do18} for compact
ones; some studies are also conducted on the existence of
local minimizers, see e.g.~\cite{AdSeTi18,PiSoVe}.


Regarding problems with a localized nonlinearity as in \eqref{Eq:
  stationnaryNLS}, existence and non-existence of ground state
solutions was discussed in \cite{Te} and of bound state solutions in
\cite{SeTeNonl16} for the $L^2$-subcritical case. We refer to
\cite{DoTe19,DoTe20} for the same problem on the $L^2$-critical
case. Moreover, in the $L^2$-subcritical regime, one may
obtain the existence of multiple bound states with negative
energy levels by applying genus theory both in the compact case as in
\cite{Do18} and in the noncompact case with localised nonlinearities
as in \cite{SeTeJDE16}.

However, in the $L^2$-supercritical regime on general metric graphs,
i.e., when $p>6$, the energy functional $E(\cdot, \G)$ is always
unbounded from below.  Moreover, due to the fact that graphs are not
scale invariant, the techniques based on scalings, usually employed in
the Euclidean setting and related to the validity of a Pohozaev
identity (see \cite{Je97} or \cite{BaSo17,BaSo18, IkTa19, SoJDE20,
  SoJFA20}), do not work. These two features make the search for
normalized solutions in the $L^2$-supercritical regime
delicate. Recently, in \cite{ChJeSo}, this issue was considered on
compact metric graphs for which the existence of a
non-constant solution was proved for small values of
$\mu >0$. In \cite{BoChJeSo23}, the case of a noncompact graph
with a nonlinearity acting only on its compact core was considered.
For any mass the existence of at least one positive solution
to \eqref{Eq: stationnaryNLS} was obtained.  Our aim here is to show
that, under exactly the same assumptions as in
\cite{BoChJeSo23}, the existence of infinitely many, possibly
sign-changing, solutions can be obtained for an arbitrary mass.

\subsection*{Basic notations and main result}

For any graph, we write
$\G = (\mathcal{E}, \mathcal{V})$, where $\mathcal{E}$ is the set of
edges and $\mathcal{V}$ is the set of vertices.  Each bounded edge
$\edge$ is identified with a closed bounded interval
$I_\edge = [0,\ell_\edge]$ (where $\ell_\edge$ is the length
of $\edge$), while each unbounded edge is identified with a closed
half-line $I_\edge = \intervalco{0,+\infty}$.
The length of the shortest path between points provides
  a natural metric (whence a topology and a Borel structure)
  on $\G$.
A function $u: \G \to \R$ is identified with a vector of functions
$\{u_{\edge}\}_{\edge \in \mathcal{E}}$, where each $u_{\edge}$ is
defined on the corresponding interval $I_\edge$ such that
$u|_{\edge}=u_{\edge}$.  Endowing each edge with the Lebesgue
measure, one can define $\int_\G u(x) \intd x$ and the space
$L^p(\G)$ in a natural way, with norm
\begin{equation*}
  \|u\|_{L^p(\G)}^p
  = \sum_{\edge \in \mathcal{E}} \|u_\edge\|_{L^p(\edge)}^p.    
\end{equation*}
The Sobolev space $H^1(\G)$ consists of the set of continuous
functions $u: \G \to \R$ such that
$u_{\edge} \in H^1(\mathring{e})$ for every edge
$\edge$; the norm
in $H^1(\G)$ is defined as
\begin{equation*}
  \|u\|_{H^1(\G)}^2
  = \sum_{\edge \in \mathcal{E}} \|u_{\edge}'\|_{L^2(\edge)}^2
  + \|u_{\edge}\|_{L^2(\edge)}^2.
\end{equation*}
More details can be found in \cite{AdSeTi15,AdSeTi16,BeKu}.

We shall study the existence of critical points of the functional $E(\cdot,\mathcal{G})\colon H^1(\G)\to \R$ constrained on the $L^2$-sphere
\begin{equation*}
  H_{\mu}^1(\mathcal{G})
  := \Bigl\{u\in H^1(\mathcal{G}) \Bigm|
  \int_{\mathcal{G}}|u|^2\intd x=\mu \Bigr\}.
\end{equation*}
If 
 $u\in H^1_{\mu}(\mathcal{G})$ is such a critical point, it is standard to show that there exists a Lagrange multiplier $\lambda\in \mathbb{R}$ such that $u$ satisfies the following problem:
\begin{equation}\label{1.2}
  \begin{cases}
    -u''+\lambda u=\kappa(x)|u|^{p-2}u
    &\text{on every edge } \edge \in \mathcal{E},\\[1\jot]
    \displaystyle
    \sum_{\edge \incident \vv} \dd{u_{\edge}}{x}(\vv)=0
    &\text{at every vertex } \vv \in \mathcal{V},
  \end{cases}
\end{equation}
where $\edge \incident \vv$ means that the edge $\edge$ is incident at
$\vv$, and the notation $\dd*{u_{\edge}}{x}(\vv)$ stands for
$u'_{\edge}(0)$ or $-u'_{\edge}(\ell_{\edge})$, according to whether
the vertex $\vv$ is identified with $0$ or $\ell_{\edge}$ (namely, the
sum involves the derivatives away from the vertex $\vv$). The
second equation is the so-called \emph{Kirchhoff boundary condition}.
\medskip

Our main result is the following :

\begin{theorem}\label{thm: Whole ex}
  Let $\G$ be any metric graph satisfying Assumption \eqref{HG} and
  $p>6$. Then, for any \mbox{$\mu >0$}, Problem~\eqref{1.2} with
  the mass constraint \eqref{Eq: mass constraint} has infinitely many
  distinct solutions. Moreover, these solutions are associated to
  positive Lagrange multipliers and correspond to a sequence of
  critical points of the functional $E(\cdot\,,\mathcal{G})$
  constrained on $H_{\mu}^1(\mathcal{G})$
whose levels go to $+ \infty$.
\end{theorem}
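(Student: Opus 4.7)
The plan is to apply a symmetric min-max scheme based on Krasnoselskii's genus, exploiting the $\Z/2$-invariance $u \mapsto -u$ of $E(\cdot,\G)$. For each integer $k \geq 1$, I would define
\[
c_k = \inf_{A \in \Sigma_k} \sup_{u \in A} E(u, \G),
\]
where $\Sigma_k$ is a family of closed, symmetric subsets of $H^1_\mu(\G)$ of Krasnoselskii genus at least $k$, and aim to show that each $c_k$ is a positive critical value of $E$ on $H^1_\mu(\G)$ with $c_k \to +\infty$ as $k \to \infty$.

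The divergence $c_k \to \infty$ is the more mechanical half. Finiteness of $c_k$ follows from exhibiting explicit symmetric test families of genus $k$, e.g.\ spans of $k$ functions with mutually disjoint supports; such families are easy to produce because $\G$ has at least one half-line, which gives ample room for translates. The lower bound and divergence come from an intersection argument: any symmetric set of genus $\geq k$ must meet every closed subspace of codimension $k-1$, and on a suitable sequence of finite-codimension subspaces (coming from a spectral decomposition of the Kirchhoff Laplacian on $\G$), the $L^p(\K)$-norm is controlled by the Dirichlet norm with a constant improving as $k$ grows, forcing $E$ to diverge along such subspaces when restricted to the mass constraint.

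The main obstacle is to verify that each $c_k$ is genuinely a critical value. This requires both a $\Z/2$-equivariant deformation lemma on the $C^2$-manifold $H^1_\mu(\G)$ and, more delicately, a Palais--Smale condition at the levels $c_k$ in the $L^2$-supercritical regime $p > 6$. Since graphs lack scale invariance, the classical Pohozaev-based strategy for bounding PS sequences fails directly. To overcome this, I would adapt the approach of \cite{BoChJeSo23}: introduce an augmented functional $\tilde E(s,u) = E(H(s,u),\G)$ on $\R \times H^1(\G)$, where $H(s,\cdot)$ is a mass-preserving transformation acting along a fixed half-line of $\G$, and run the min-max on $\tilde E$ rather than $E$. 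The resulting PS sequences satisfy an additional asymptotic identity of Pohozaev type which, combined with $p > 6$, forces uniform $H^1$-bounds as well as positivity of the associated Lagrange multipliers $\lambda_k$. Once bounded, strong convergence follows from the localization of the nonlinearity: the compact embedding $H^1(\G) \hookrightarrow L^p(\K)$ (the nonlinear term lives on the compact core) makes the nonlinear part weakly sequentially continuous, and a standard argument on the Lagrange multiplier finishes the extraction of a critical point.

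Finally, $c_k \to +\infty$ together with the genus of the critical sets delivers infinitely many distinct critical points: either the $c_k$ are eventually distinct and give infinitely many solutions, or some levels coincide, in which case the corresponding critical set has positive genus and is therefore uncountable. The positivity of all $\lambda_k$ and the divergence of the energy levels are then immediate from the construction.
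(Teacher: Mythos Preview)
Your proposed route has a genuine gap at the Palais--Smale step. The augmented-functional device you describe --- introducing $\tilde E(s,u)=E(H(s,u),\G)$ with $H(s,\cdot)$ a mass-preserving dilation along a fixed half-line --- cannot yield a useful Pohozaev-type constraint here, because the nonlinearity is supported on the compact core $\K$, which is disjoint from the half-line where your transformation acts. Differentiating $\tilde E$ in $s$ only produces an identity involving $\int_{\text{half-line}}|u'|^2$ and says nothing about $\int_\K|u|^p$; it therefore gives no control on the $H^1$-norm of a PS sequence and no information on the sign of the Lagrange multiplier. The paper explicitly notes that the scaling techniques tied to Pohozaev identities in the Euclidean setting fail on metric graphs precisely because of the lack of scale invariance, and the situation is worse with a localized nonlinearity. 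A second, independent difficulty you do not address is ruling out $\lambda_k=0$: for sign-changing critical points this is genuinely delicate (there exist graphs and masses for which nonzero solutions with $\lambda=0$ do occur), and the paper devotes Section~\ref{Preliminaries_II} to an ODE argument showing that solutions with $\lambda=0$ must have $L^2$-norm diverging as the energy diverges.

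For comparison, the paper circumvents the PS problem altogether by the monotonicity trick: it works with the parametrized family $E_\rho$, $\rho\in[1/2,1]$, and invokes an abstract result (Theorem~\ref{Objective1}) that, for almost every $\rho$, produces \emph{bounded} PS sequences carrying an approximate Morse-index bound $\le N$. That Morse information is then used twice: first to exclude $\lambda_\rho^N<0$ via Lemma~\ref{lemma:Louis1}, and second --- after passing to the limit $\rho_n\to 1^-$ --- to feed a blow-up result (Lemma~\ref{info-norms}) guaranteeing that bounded Morse index forces bounded $\lambda_{\rho_n}^N$, whence $H^1$-boundedness of the approximating solutions. A pure genus scheme does not naturally supply Morse-index control, so even if you patched the PS argument some other way, you would still need a replacement for this last step.
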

In the derivation of the results of \cite{BoChJeSo23,ChJeSo}, a
central difficulty was the lack a priori bounds on the Palais-Smale
sequences for $E(\cdot\,,\mathcal{G})$ constrained to
$H_{\mu}^1(\mathcal{G})$. To overcome this difficulty an approach by
approximation was developed.  It consists in considering
the family of functionals
$E_{\rho}(\cdot,\mathcal{G}): H^1(\mathcal{G}) \to \mathbb{R}$ given
by
\begin{eqnarray}\label{para func}
  E_{\rho}(u,\mathcal{G})
  := \frac{1}{2}\int_{\mathcal{G}}|u'|^2 \intd x
  -\frac{\rho }{p} \int_{\mathcal{K}}|u|^p \intd x,
  \qquad \forall u\in H^1(\mathcal{G}), \
  \forall \rho\in \left[\frac12, 1 \right] .
\end{eqnarray}
We shall also proceed this way.  Clearly a critical point of
$E_{\rho}(\cdot, \G)$ constrained to $H_{\mu}^1(\mathcal{G})$ is
a solution to
\begin{equation}\label{1.2LL}
  \begin{cases}
    -u''+\lambda u= \rho\kappa(x)|u|^{p-2}u
    &\text{on every edge } \edge \in \mathcal{E},\\[1\jot]
    \displaystyle
    \sum\limits_{\edge \incident \vv} \dd{u_{\edge}}{x}(\vv)=0
    &\text{at every vertex } \vv \in \mathcal{V},
  \end{cases}
\end{equation}
where $\lambda$ is the associated Lagrange multiplier. Denoting by
$\morse(u)$ the Morse index of a solution $u \in H^1_{\mu}(\G)$ to
\eqref{1.2LL},
we establish 

\begin{theorem}\label{thm: main exL}
  Let $\G$ satisfy Assumption~\eqref{HG} and $p>6$. For any $\mu >0$
  there exists $N_0 \in \N$ such that for
  almost every $\rho\in [1/2,1]$, there exist sequences
  of Lagrange multipliers
  $\{\lambda_{\rho}^N\}_{N = N_0}^{\infty} \subset \R^+$ and solutions
  $\{u_{\rho}^N \}_{N = N_0}^{\infty} \subset H^1_{\mu}(\G)$ to
  \begin{equation}\label{Eq:pb rho}
    \begin{cases}
      -(u_\rho^N)'' + \lambda_\rho^N u_\rho^N
      = \rho \kappa(x) |u_\rho^N|^{p-2} \, u_{\rho}^N
      &\text{on every edge } \edge \in \mathcal{E}, \\[1.5\jot]
      \displaystyle
      \sum_{\edge \incident \vv} \dd{u_\rho^N}{x}(\vv) = 0
      &\text{at every vertex } \vv \in \mathcal{V}.
    \end{cases}
  \end{equation}
  In addition, $c_{\rho}^N := E_{\rho}(u_{\rho}^N, \G)
  \xrightarrow[N \to \infty]{} + \infty$
  uniformly w.r.t.\ $\rho \in \intervalcc{1/2, 1}$
  and $\morse(u_{\rho}^N) \le N+1$.
\end{theorem}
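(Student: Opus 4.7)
The plan is to combine a $\mathbb{Z}_2$-symmetric min-max scheme on the constraint manifold $H^1_\mu(\G)$ with the monotonicity trick of Struwe and Jeanjean, which is precisely the reason for having introduced the $\rho$-dependent family $E_\rho$. The first step is to exhibit an $L^2$-supercritical mountain-pass geometry: since $p>6$ and $\K$ has finite measure, a Gagliardo-Nirenberg estimate on $\K$ bounds $\int_\K |u|^p \intd x$ by a superquadratic power of $\|u'\|_{L^2(\G)}$ on $H^1_\mu(\G)$, which makes $E_\rho(\cdot,\G)$ bounded below by a positive constant on a small sphere $\{\|u'\|_{L^2}=r\}\cap H^1_\mu(\G)$ (uniformly in $\rho\in\intervalcc{1/2,1}$) while staying unbounded below on $H^1_\mu(\G)$. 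I would then introduce the min-max values
\begin{equation*}
  c_\rho^N := \inf_{A\in\Gamma_N} \sup_{u\in A} E_\rho(u,\G),
\end{equation*}
where $\Gamma_N$ is a class of closed symmetric subsets of $H^1_\mu(\G)$ of Krasnoselskii genus $\ge N+1$, forced by construction to cross the mountain-pass sphere. Test configurations built from the first $N$ Dirichlet eigenfunctions of $-\Delta$ on $\K$, renormalised to have mass $\mu$, show that $c_\rho^N<+\infty$ with an upper bound independent of $\rho$; a linking of any $A\in\Gamma_N$ with suitable co-dimension-$N$ subsets, together with the Gagliardo-Nirenberg bound, gives $c_\rho^N\to+\infty$ as $N\to\infty$, uniformly in $\rho$. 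The threshold $N_0$ accounts for those dimensions below which the symmetric sets can still be pushed under the mountain-pass barrier.

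For the second step, the monotonicity trick applies. As $\rho\mapsto E_\rho(u,\G)$ is non-increasing for fixed $u$, each function $\rho\mapsto c_\rho^N$ is non-increasing on $\intervalcc{1/2,1}$, hence differentiable at almost every $\rho$. At each such $\rho$, Jeanjean's construction delivers a \emph{bounded} Palais-Smale sequence for $E_\rho(\cdot,\G)$ constrained to $H^1_\mu(\G)$ at level $c_\rho^N$. Refining the scheme with a second deformation argument in the spirit of Ghoussoub or Bartsch-Liu-Weth, one can additionally arrange for this sequence to have augmented Morse index at most $N+1$, a bound stable under strong convergence. Extracting a weakly $H^1$-convergent subsequence with limit $u_\rho^N$ and Lagrange multiplier $\lambda_\rho^N\in\R$, the compact embedding $H^1(\G)\hookrightarrow L^p(\K)$ (valid since $\K$ is compact) handles convergence of the nonlinear term. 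Testing the limit equation against $u_\rho^N$ and combining with $c_\rho^N>0$ and the condition $p>6$ forces $\lambda_\rho^N>0$; exponential decay on the half-lines of any solution to $-v''+\lambda_\rho^N v=0$ then upgrades the weak convergence to strong $H^1$-convergence, yielding a genuine critical point together with the claimed Morse-index bound.

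The main obstacle I foresee is the compactness of the bounded Palais-Smale sequences at positive level, specifically ruling out escape of mass along the half-lines where the nonlinearity is absent. On any half-line, $u_n$ behaves asymptotically as a solution of the \emph{linear} equation $-v''+\lambda v=0$, which in $H^1$ forces exponential decay when $\lambda>0$, so any profile emerging at infinity would solve a free linear equation on $\R$ and must vanish. The delicate point is therefore to secure $\lambda_\rho^N>0$ (crucially using $p>6$ through a Pohozaev-type test) and to carry out a profile decomposition on $\G$ that is compatible with sign changes, with the Kirchhoff vertex conditions, and with the Morse-index bound.
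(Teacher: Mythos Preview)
Your overall architecture --- symmetric min-max plus the monotonicity trick to produce bounded Palais-Smale sequences carrying a Morse-index bound --- is indeed the backbone of the paper's proof, although the paper uses a specific class $\Gamma_N$ of odd homotopies (Theorem~\ref{Objective1}) rather than a genus class. That difference is minor. The real gap is your treatment of the sign of the Lagrange multiplier.

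You write that ``testing the limit equation against $u_\rho^N$ and combining with $c_\rho^N>0$ and the condition $p>6$ forces $\lambda_\rho^N>0$''. This does not work. Testing against $u$ gives $\|u'\|_{L^2}^2 + \lambda\|u\|_{L^2}^2 = \rho\|\kappa u\|_{L^p}^p$, and combining with the energy yields
\[
  c_\rho^N = \Bigl(\tfrac12 - \tfrac1p\Bigr)\|u'\|_{L^2}^2 - \tfrac{\lambda}{p}\|u\|_{L^2}^2,
\]
which carries no sign information on $\lambda$. More decisively, the paper exhibits (Figure~\ref{fig:tadpole}) explicit nonzero sign-changing solutions of \eqref{1.2LL} with $\lambda=0$, so no identity involving only these integrals can exclude $\lambda=0$. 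The ``Pohozaev-type test'' you invoke at the end is not available in the usual form: metric graphs are not scale-invariant, and the standard Pohozaev identity does not close.

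The paper handles this in two separate steps, each using a structure you do not mention. First, $\lambda_\rho^N<0$ is ruled out \emph{via the Morse-index bound}, not by testing: if $\lambda<0$, one builds on a half-line arbitrarily many disjointly supported functions $w$ (hence $\kappa w=0$) satisfying $\int_\G|w'|^2+\lambda|w|^2 \le \tfrac{\lambda}{2}\|w\|_{H^1}^2$, which forces $\tilde\morse_{\zeta_n}(u_n)$ to exceed any $N$ (Lemmas~\ref{lemma:Louis1} and~\ref{lemma:L-eigenvalue}). Second, $\lambda_\rho^N=0$ is ruled out by an ODE analysis on the compact core (Section~\ref{Preliminaries_II}): a graph Pohozaev identity (Proposition~\ref{thm:pohozaev}) links $E_\rho(u,\G)$ to the sum of edge ODE-energies $H_u(e)$ when $\lambda=0$, and a period/amplitude computation (Lemma~\ref{lem_determined}, Corollary~\ref{below_estimate}) shows that high ODE-energy forces large $L^2$-mass on a single edge. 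Thus solutions with $\lambda=0$ and $E_\rho\to+\infty$ must have $\|u\|_{L^2}\to\infty$ (Proposition~\ref{forbid}), contradicting the mass bound inherited from $H^1_\mu(\G)$. This is precisely why $N_0$ enters: only for $N$ large enough is $c_\rho^N$ high enough to trigger the contradiction. Your proposal, as it stands, is missing both mechanisms.
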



To derive Theorem \ref{thm: Whole ex} from Theorem \ref{thm: main
  exL}, one considers for every fixed $\mu >0$ and every fixed
$N \geq N_0$, a sequence $\{u_{\rho_n}^N\}_{n=1}^{\infty}$ of solutions
to \eqref{Eq:pb rho} where $\rho_n \to 1^-$ and shows that it
converges to some $u^N \in H^1_{\mu}(\G)$. Such
$u^N \in H^1_{\mu}(\G)$ will be a solution to \eqref{Eq: mass
  constraint}--\eqref{1.2}. The point here is to show that the sequence
is bounded which in turn is equivalent to showing that the sequence
$\{\lambda_{\rho_n}^N\}_{n=1}^{\infty} \subset \R$ is bounded.  In
\cite{BoChJeSo23,ChJeSo} this step was done through a blow-up analysis
taking advantage that $u_{\rho_n}^N \in H^1_{\mu}(\G)$ were positive
functions. A more general blow-up analysis, in particular for possibly
sign-changing solutions, was subsequently performed in
\cite{CaGaJeTr}.  A consequence of this blow-up analysis
(see \cite[Corollary~1.4]{CaGaJeTr}),
stated here under our notation in Lemma~\ref{info-norms}, guarantees
the boundedness of the sequence of
$\{\lambda_{\rho_n}^N\}_{n=1}^{\infty} \subset \R$ thanks to
the boundedness of the Morse index of the solutions
$u_{\rho_n}^N \in H^1_{\mu}(\G)$.

\smallskip

Now let us turn to the proof of Theorem \ref{thm: main exL}.
It relies on an
abstract result~\cite[Theorem~1.12]{BoChJeSo2} which we recall here as
Theorem~\ref{Objective1}.  Used on our family
$E_{\rho}(\cdot,\mathcal{G}): H^1(\mathcal{G}) \to \mathbb{R}$, it will
guarantee that, for any $\mu >0$ and any $N \in \N$, under some
geometric conditions, the functional $E_{\rho}(\cdot, \G)$ admits, for
almost every $\rho \in [1/2,1]$, a \emph{bounded} Palais-Smale
sequence $\{u_{\rho, n}^N\}_{n =1}^{\infty}$
at level $c_{\rho}^N$ which has an ``approximate constrained
Morse index at most~$N$.''

\smallskip

To be more specific, our strategy to prove Theorem \ref{thm:
  main exL} is the following.  First we show that the
geometrical assumptions on $E_{\rho}(\cdot,\mathcal{G})$,
$\rho \in [1/2,1]$ are satisfied. Second, we check that the
Palais-Smale sequences provided by the application of Theorem
\ref{Objective1} converge. Finally, we observe that this process
guarantees the existence of infinitely many distinct solutions
$u_{\rho}^N \in H^1_{\mu}(\G)$ since $c_{\rho}^N \to + \infty$ as
$N \to + \infty$.
Let us now provide more information on the first two steps.

\smallskip

The fact that
the mentioned geometric assumptions hold is established in
Proposition~\ref{Prop: inftylevels}.  Proving this proposition
is a central part of the paper and for this we are indebted to ideas
from \cite{BaVa,BaLiLi,PiVe}. Our proof of Proposition \ref{Prop:
  inftylevels} uses the assumption that $\G$ has at least one
half-line, and it's unclear whether a similar result would
hold if the graph were compact. As a result the noncompactness of $\G$
appears to be essential in the derivation of
Theorem~\ref{thm: main exL}, see also Remark~\ref{Gcompact}.

\smallskip

Regarding the convergence of the bounded Palais-Smale
sequences $\{u_{\rho,n}^N\}_{n =1}^{\infty}$ provided by the
application of Theorem \ref{Objective1},
an essential element of
  the argument is to establish that the associated sequence of
\textit{almost Lagrange multipliers}
$\{\lambda_{\rho, n}^N\}_{n =1}^{\infty}$ (see
  page~\pageref{def-almost-Lagrange}) converges, up to a subsequence,
to a positive $\lambda_{\rho}^N \in \R$. This is done in two steps.
First, making use of the Morse type information carried by the
sequence $\{u_{\rho,n}^N\}_{n =1}^{\infty}$, we show that
$\lambda_{\rho}^N <0$ is impossible.  Here again we use the assumption
that our graph contains one half-line, see the proof of Lemma
\ref{lemma:L-eigenvalue}. Second, to show that
$\lambda_{\rho}^N \neq 0$ requires a specific treatment.  In
\cite{BoChJeSo23,ChJeSo} we were dealing with Palais-Smale sequences
consisting of non-negative functions and thus their weak
limits (which are solutions to \eqref{1.2LL} with possibly a smaller
$L^2$ norm than $\sqrt{\mu})$ were also non-negative.  It was then
rather direct to show that $\lambda_{\rho}^N >0$: see
\cite[Remark 1.2]{ChJeSo} in the case of a compact graph, or
\cite[Proof of Proposition 1.5]{BoChJeSo23} in the case of a
noncompact graph with a localized nonlinearity. In
our problem the weak limits are likely to be sign-changing.
In general, there may exist nonzero solutions with a vanishing Lagrange multiplier,
as was already observed in \cite[Section 4]{SeTeNonl16}.
For a simple example (taken from \cite[Theorem 4.2 and Remark 4.6]{SeTeNonl16}),
consider the tadpole graph shown in Figure~\ref{fig:tadpole}.

\begin{figure}[h]
    \centering
	\begin{tikzpicture}
        \draw(0, 0) circle (.5);
		\node at (0.5, 0) [circle, draw, fill, inner sep=0pt, minimum size=0.5*width("k")] (1) {};
        \node at (3.5, 0) [circle, inner sep=0pt, minimum size=0mm] (2) {$\scriptstyle\infty$};
		\draw [-] (1) -- (2);
	\end{tikzpicture}
    \caption{A tadpole graph}
    \label{fig:tadpole}
\end{figure}
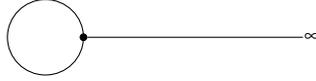

If the loop has a suitable length, one can put a sign-changing periodic solution of the equation
$-u'' = |u|^{p-2}u$ on the loop and extend it by zero on the half-line to obtain a solution of the problem
on the whole tadpole graph with a Lagrange multiplier equal to zero.
To treat general graphs we make use
of ODE techniques in a way which
we believe new in this context. Assuming that $\lambda =0$ in
\eqref{1.2LL}, we show that the $L^2$ norm of a solution
$u \in H^1_{\mu}(\G)$ goes to infinity as $E(u, \G)$ goes to infinity,
see Lemma \ref{below_estimate} for a precise statement.  This
observation leads to the conclusion that if the suspected energy
level, $c_{\rho}^N \in \R$ is sufficiently high, the case
$\lambda_{\rho}^N = 0$ cannot happen. Having obtained that
$\lambda_{\rho}^N >0$ and using that the nonlinearity is compactly
supported we obtain the convergence of our Palais-Smale sequences and
this proves Theorem \ref{thm: main exL}.

\begin{remark}\label{comparisonsub}
  Our multiplicity result Theorem \ref{thm: Whole ex} is in sharp
  contrast to what has been observed in \cite{SeTeJDE16, SeTeNonl16}
  in the mass subcritical case $p <6$.  Indeed,
  \cite[Corollary~3.8]{SeTeNonl16} shows that for a \emph{graph
    without cycle} (also called a tree), with at most one
  \emph{pendant} (see \cite{SeTeNonl16} for the terminology), there
  are no solutions to \eqref{Eq: mass constraint}--\eqref{1.2}
  when $ p \in [4, 6[$ and $\mu >0$ is small. Also, in \cite[Theorem
        1.2]{SeTeJDE16}, to obtain $k \in \N$ solutions it is
      necessary to assume that $\mu > \mu(k)$. We have no such
      limitations in Theorem \ref{thm: Whole ex}.
\end{remark}

\begin{remark}\label{localizedcreate}
  As it was already observed in \cite{SeTeJDE16} in the mass
  subcritical case, the localization of the nonlinearity on the
  non-trivial compact core is essential to our multiplicity
  results.
  Indeed, if the compact core is reduced to a point, $\G$
    is a star graph and \eqref{1.2} becomes linear.  This problem
    possesses no solution in $H^1(\G)$ regardless of the value of
    $\mu > 0$.  On the other hand, if $\G$ is an interval with two
    half-lines attached to its endpoints and the nonlinearity is not
    localized, then solutions to \eqref{Eq: mass
      constraint}--\eqref{1.2} are the same as those on $\R$, namely
    the unique symmetric positive ground state, its opposite, along
    with their translations (all of which have the same energy level).
\end{remark}

\begin{remark}\label{Gcompact}
  Let us mention that the issue of multiplicity, even the existence of
  just two non-trivial solutions, is still open for a general
    compact graph $\G$.  In \cite[Theorem~1.1]{ChJeSo} only
  one non-constant solution solution is obtained (note that there
  always exists a constant solution to \eqref{Eq: mass
    constraint}--\eqref{1.2} on a compact graph).
\end{remark}

The paper is organized as follows. In Section~\ref{Preliminaries_I} we
recall with Theorem~\ref{Objective1} the contents of
\cite[Theorem~1.12]{BoChJeSo2} and explore some of its consequences. In
particular, we show that second-order information on Palais-Smale
sequences can be used to obtain uniform bounds from below on the
associated sequences of almost Lagrange multipliers, see Lemma
\ref{lemma:Louis1}. We also derive some results, in Lemma
\ref{Pierotti-Verzini} and Theorem \ref{prop:MPcondition}, which
provide abstract conditions allowing to check that the main
assumptions of Theorem \ref{Objective1} hold.  Most of
Section~\ref{Preliminaries_II} is devoted to show that solutions
to~\eqref{1.2LL} with $\lambda =0$ have a $L^2$ norm going to infinity
as their Energy goes to infinity (see Proposition~\ref{forbid}). In
Section \ref{Proof_Th2}, we prove Proposition \ref{Prop: inftylevels}
which shows that our problem can indeed be treated by an application
of Theorem \ref{Objective1}. In Section \ref{section:strong conv} we
give the proof of Theorem \ref{thm: main exL}.  Finally, in Section
\ref{Proof_Th1} we deduce Theorem \ref{thm: Whole ex} from Theorem
\ref{thm: main exL} making use of the already mentioned blow up
analysis result from~\cite{CaGaJeTr}.

\section{An Abstract Multiplicity Result}
\label{Preliminaries_I}

In this section we recall in Theorem \ref{Objective1} the contents of
\cite[Theorem~1.12]{BoChJeSo2} and present some of its
consequences. We also establish results which permit to check the two
main hypotheses the set defined by \eqref{SMP1} must satisfy: Lemma
\ref{Pierotti-Verzini} gives a procedure to prove it is non-void and
Theorem \ref{prop:MPcondition} provides a tool to check the key strict
inequality \eqref{SMP2} appearing in Theorem
\ref{Objective1}. \smallskip

In order to state \cite[Theorem~1.12]{BoChJeSo2}  we need to recall some definitions. \smallskip

Let $\left(E,\langle \cdot, \cdot \rangle\right)$ and $\left(H,(\cdot,\cdot)\right)$ be two \emph{infinite-dimensional} Hilbert spaces and assume that $E\hookrightarrow H \hookrightarrow E'$,
with continuous injections.  For simplicity, assume that the
continuous injection $E\hookrightarrow H$ has norm at most $1$ and
identify $E$ with its image in $H$.  Set
\begin{equation*}
  \begin{cases}
    \|u\|^2=\langle u,u \rangle, &u\in E,\\
    |u|^2=(u,u), &u \in H,
  \end{cases}
\end{equation*}
and define for $\mu>0$:
\begin{equation*}
  S_\mu = \bigl\{ u \in E\bigm| |u|^2=\mu \bigr\}.
\end{equation*}
In the context of this paper, we shall have $E=H^1(\G)$ and $H=L^2(\G)$.
Clearly, $S_{\mu}$ is a smooth submanifold of $E$ of codimension
$1$. Furthermore its tangent space at a given point $u \in S_{\mu}$
can be considered as the closed subspace of codimension $1$ of $E$
given by:
\begin{equation*}
  T_u S_{\mu} = \bigl\{v \in E \bigm| (u,v) =0 \bigr\}.
\end{equation*}
In the following definition, we denote  $\|\cdot\|_*$ and
$\|\cdot\|_{**}$ the operator norm of $\mathcal{L}(E,\R)$ and of
$\mathcal{L}(E,\mathcal{L}(E,\R))$ respectively.

\begin{definition}\label{Holder-continuous}
  Let $\phi : E \rightarrow \mathbb{R}$ be a $C^2$-functional on $E$
  and $\alpha \in \intervaloc{0,1}$. We say that $\phi'$ and $\phi''$
  are \emph{$\alpha$-H\"older continuous on bounded sets} if for any
  $R>0$, one can find $M=M(R)>0$ such that, for any
  $u_1,u_2\in B(0,R)$:
  \begin{equation}\label{Holder}
    \|\phi'(u_1)-\phi'(u_2)\|_* \leq M \|u_1-u_2\|^{\alpha}, \qquad
    \|\phi''(u_1)-\phi''(u_2)\|_{**} \leq M\|u_1-u_2\|^\alpha.
  \end{equation}
\end{definition}

\begin{remark}
  \label{rem:Holder-cont}
  Note that, if $\phi''$ is $\alpha$-H\"older continuous on bounded
  sets, then $\phi'$ is Lipschitz continuous on bounded sets,
  whence also $\alpha$-H\"older continuous on bounded sets.
  
\end{remark}

\begin{definition}\label{def D}
  Let $\phi$ be a $C^2$-functional on $E$.  For any $u\in E$, we
  define the continuous bilinear map:
  \begin{equation*}
    D^2\phi(u)
    := \phi''(u) - \frac{\phi'(u)[u]}{|u|^2} (\cdot,\cdot).
  \end{equation*}
\end{definition}
Note that, if $u$ is a critical point of $\phi$ restricted to
  the sphere $S_\mu$, then $D^2 \phi(u)$, seen as a bilinear form on
  $T_u S_\mu$, is the second derivative of $\phi|_{S_\mu}$ at $u$.

\begin{definition}\label{def: app morse}
  Let $\phi$ be a $C^2$-functional on $E$. { For} any $u\in \M$ and
  $\theta \ge 0$, we define the \emph{approximate Morse index} by
  \begin{equation*}
    \tilde \morse_\theta(u)
    = \sup \bigl\{\dim L \bigm| L \text{ is a subspace of }
      T_u \M \text{ such that }
      \forall \varphi \in L \setminus \{0\},\
      D^2\phi(u)[\varphi, \varphi]<-\theta \|\varphi\|^2
      \bigr\}.
  \end{equation*}
  If $u$ is a critical point for the constrained functional
  $\phi|_{\M}$ and $\theta=0$, we say that this is the \emph{Morse
    index of $u$ as constrained critical point}.
\end{definition}

We may now formulate \cite[Theorem~1.12]{BoChJeSo2}. 
 Its derivation is based on a combination of ideas from \cite{FaGh-1994,Je99} implemented in a convenient geometric setting. 

\begin{theorem}\label{Objective1}
  Let $I\subset (0,\infty)$ be an interval and consider a family of
  $C^2$ functionals $\Phi_\rho\colon E\to \R$ of the form
  \begin{equation*}
    \Phi_\rho(u)=A(u)-\rho B(u),\qquad \rho\in I,
  \end{equation*}
  where $B(u)\geq 0$ for all $u\in E$ and
  \begin{equation}\label{hp coer}
    A(u) \to +\infty
    \text{ or } B(u) \to +\infty
    \quad \text{as } u \in E \text{ and }
    \|u\| \to +\infty.
  \end{equation}
  Suppose that, for every $\rho \in I$, $\Phi_\rho|_{S_\mu}$ is even
  and moreover that $\Phi'_\rho$ and $\Phi''_\rho$ are
  $\alpha$-H\"older continuous on bounded sets in the sense of
  Definition~\ref{Holder-continuous} for some
  $\alpha \in \intervaloc{0,1}$. Finally, suppose that there exists
  an integer $N \geq 2$ and two odd functions
  $\gamma_i : \mathbb{S}^{N-2} \to S_{\mu}$ where $i =0,1$, such
  that the set
  \begin{equation}\label{SMP1}
    \Gamma_N
    := \bigl\{ \gamma\in C([0,1]\times \mathbb{S}^{N-2}, S_\mu)
    \bigm| \forall t \in [0,1],\  \gamma(t,\cdot) \text{ is odd, }
    \gamma (0, \cdot) = \gamma_0, \text{ and }
    \gamma(1, \cdot) = \gamma_1 \bigr\}
  \end{equation}
  is non void and 
  \begin{equation}\label{SMP2}
    c_{\rho}^N
    := \inf_{\gamma\in\Gamma_N}
    \,\max_{(t,a) \in [0,1] \times \mathbb{S}^{N-2}}
    \Phi_\rho(\gamma(t,a))
    > \max_{a \in \mathbb{S}^{N-2}}
    \bigl\{\Phi_\rho (\gamma_0(a)), \Phi_\rho (\gamma_1(a))\bigr\},
    \quad \forall \rho \in I.
  \end{equation}
  Then, for almost every $\rho \in I$, there exist sequences
  $\{u_n\} \subset \M$ and $\zeta_n \to 0^+$ such that, as
  $n \to + \infty$,
  \begin{itemize}
  \item[(i)] $\Phi_\rho(u_n) \to c_{\rho}^N$;
  \item[(ii)] $\bigl\| \Phi'_\rho|_{\M}(u_n) \bigr\| \to 0$;
  \item[(iii)] $\{u_n\}$ is bounded in $E$;
  \item[(iv)] $\tilde\morse_{\zeta_n}(u_n) \leq N$.
  \end{itemize}
\end{theorem}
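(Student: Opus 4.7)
The plan is to combine two techniques in a $\Z/2$-equivariant min-max framework: Jeanjean's monotonicity trick \cite{Je99} to extract \emph{bounded} Palais--Smale sequences at the min-max level $c_\rho^N$, and the Fang--Ghoussoub machinery \cite{FaGh-1994} of ``second-order'' deformations to produce sequences that additionally satisfy the approximate Morse index bound. The oddness of the paths in $\Gamma_N$ provides the equivariance (for the antipodal action on $\mathbb{S}^{N-2}$ and $u \mapsto -u$ on $S_\mu$, which preserves $\Phi_\rho$ by the evenness hypothesis), while the strict inequality \eqref{SMP2} ensures that $c_\rho^N$ is a genuine linking-type level strictly above the endpoints.

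For the boundedness step, I would first observe that $B(u) \ge 0$ implies $\rho \mapsto \Phi_\rho(u)$ is non-increasing for each fixed $u \in E$, hence so is $\rho \mapsto c_\rho^N$. By Lebesgue's monotone differentiation theorem, the map $\rho \mapsto c_\rho^N$ is differentiable at almost every $\rho \in I$; at any such point $\rho_0$, I would pick $\rho_n \nearrow \rho_0$ and near-optimal paths $\gamma_n \in \Gamma_N$. Estimating the difference quotients $(c_{\rho_n}^N - c_{\rho_0}^N)/(\rho_n - \rho_0)$ against the definition of the min-max levels forces the ``almost maximum'' points $u_n = \gamma_n(t_n,a_n)$ to satisfy an a priori bound on $B(u_n)$, which combined with the coercivity hypothesis \eqref{hp coer} yields that $\|u_n\|$ is bounded. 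An equivariant version of Ekeland's variational principle on $S_\mu$, together with the pseudo-gradient flow supplied by the $\alpha$-H\"older regularity of $\Phi'_\rho$, then converts these bounded almost-maximizers into a bounded Palais--Smale sequence at level $c_{\rho_0}^N$, giving conclusions~(i)--(iii).

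The main obstacle is conclusion~(iv). Following Fang--Ghoussoub, I would argue by contradiction: if no Palais--Smale sequence with $\tilde\morse_{\zeta_n}(u_n) \le N$ existed, then at every near-critical $u \in S_\mu$ the bilinear form $D^2\Phi_\rho(u)$ of Definition~\ref{def D} (which equals the Hessian of $\Phi_\rho|_{S_\mu}$ at constrained critical points) would be strongly negative definite on some $(N{+}1)$-dimensional subspace of $T_u S_\mu$. Using the $\alpha$-H\"older continuity of $\Phi''_\rho$ to control the quadratic remainder, one can build a local second-order deformation along those $(N{+}1)$ negative directions that strictly decreases $\Phi_\rho$, paste such local pieces via an odd partition of unity, and compose with the first-order pseudo-gradient flow to push any near-optimal $\gamma \in \Gamma_N$ slightly below $c_\rho^N$ while remaining in $\Gamma_N$ (the oddness is preserved because the flow is $\Z/2$-equivariant, and the endpoints $\gamma_0,\gamma_1$ are untouched because \eqref{SMP2} places them strictly below the level). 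The resulting path would contradict the definition of $c_\rho^N$, so a sequence satisfying~(iv) must exist. The delicate point is carrying out this second-order deformation intrinsically on the manifold $S_\mu$ (hence the use of $D^2\Phi_\rho$ rather than $\Phi''_\rho$) and reconciling it with the $\Z/2$-symmetry and with the boundedness obtained in the previous step.
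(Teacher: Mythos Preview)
The paper does not prove this theorem: it is quoted verbatim as \cite[Theorem~1.12]{BoChJeSo2}, with the single remark that ``its derivation is based on a combination of ideas from \cite{FaGh-1994,Je99} implemented in a convenient geometric setting.'' Your sketch is precisely that combination---Jeanjean's monotonicity trick for the boundedness and Fang--Ghoussoub's second-order deformation for the approximate Morse index bound, carried out equivariantly---so there is nothing further to compare against in this paper.
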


\begin{remark}\label{strong-convergence}
  If the sequence $\{u_n\} \subset \M$ provided by
    the previous Theorem converges to some $u_\rho \in \M$, then
  in view of points (i)--(ii),
  $u_\rho$ is a critical point of
  $\Phi_\rho|_{\M}$ at level $c_{\rho}^N$.  Let us show that the
  Morse index of $u_{\rho}$, as a
  constrained critical point, satisfies
  $\tilde\morse_{0}(u_{\rho}) \leq N$.  Assume by contradiction
  that this is not the case. Then, in view of Definition \ref{def: app
    morse}, we may assume that there exists a
  $W_0 \subset T_{u_{\rho}}\M$ with $\dim W_0 =N+1$ such that
  \begin{equation*}
    D^2 \Phi_\rho(u_{\rho})[w,w]<0
    \quad \text{for all } w \in W_0 \setminus \{0\}.
  \end{equation*}
  Since $W_0$ is of finite dimension, its unit sphere is
    compact and there exists $\theta > 0$ such that
  \begin{equation*}
    D^2 \Phi_\rho(u_{\rho})[w,w] < -\theta \|w\|^2
    \quad \text{for all } w \in W_0 \setminus\{0\}.
  \end{equation*}
  Now, from \cite[Corollary~1]{BoChJeSo2} or using directly that
  $\Phi_\rho'$ and $\Phi_\rho''$ are $\alpha$-H\"older continuous on
  bounded sets for some $\alpha \in \intervaloc{0,1}$, it follows that
  there exists $\delta>0$ small enough such that, for any $v\in\M$
  satisfying $\|v-u_{\rho}\| \leq \delta$,
  \begin{equation*}
    D^2\Phi_\rho(v)[w,w] < -\frac{\theta}{2} \|w\|^2
    \quad \text{for all } w \in W_0 \setminus \{0\}.
  \end{equation*}
  In particular, for $n$ large enough, $\|u_n - u_\rho\| \le
    \delta$ and $\zeta_n < \theta/2$ (as $\zeta_n \to 0^+$),
    so the previous inequality implies
  \begin{equation*}
    D^2\Phi_\rho(u_n)[w,w] < -\frac{\theta}{2}\|w\|^2 < -\zeta_n\|w\|^2
    \quad \text{for all } w \in W_0 \setminus \{0\} .
  \end{equation*}
  Remembering that $\dim W_0>N$ and observing
  that Theorem \ref{Objective1} (iv) directly implies that if there
  exists a subspace $W_n \subset T_{u_n}\M$ such that
  \begin{equation*}
    D^2\Phi_{\rho}(u_n)[w,w]  < - \zeta_n \|w\|^2,
    \quad \text{for all } w \in W_n \setminus \{0\},
  \end{equation*}
  then necessarily $\dim W_n \leq N$, we have reached a
  contradiction.
\end{remark}

From Theorem \ref{Objective1}  (ii)--(iii), we deduce in a standard way, see \cite[Remarks~1.3]{BoChJeSo2} or \cite[Lemma~3]{BeLi2}, that 
\begin{equation}\label{free-gradient}
  \Phi'_{\rho}(u_n) + \displaystyle \lambda_n (u_n, \cdot) \to 0
  \quad\text{ in }  E' \text{ as } n \to + \infty
\end{equation}
where we have set 
\begin{equation}\label{def-almost-Lagrange} 
  \lambda_n
  := - \displaystyle \frac{1}{\mu} \Phi'_\rho(u_n)[u_n].
\end{equation} 
We call the sequence $\{\lambda_n\} \subset \R$ defined in
\eqref{def-almost-Lagrange} the sequence of \emph{almost Lagrange
  multipliers}. \medskip

The following lemma will allow to derive information on such sequences. 
\begin{lemma}\label{lemma:Louis1}
  Let $\{u_n\} \subset S_{\mu}$, $\{\lambda_n\} \subset \mathbb{R}$
  and $\{\zeta_n\}\subset \mathbb{R}^+$ with $\zeta_n \to 0^+$. Assume
  that, for a given $M \in \N$, the following conditions hold:
  \begin{itemize}
  \item[(i)] For large enough $n \in \mathbb{N}$, all subspaces
    $W_n \subset E$ with the property
    \begin{equation}\label{LL-Hess crit*}
      \Phi''_{\rho}(u_n)[\varphi,\varphi] + \lambda_n|\varphi|^2
      < -\zeta_n \|\varphi\|^2,
      \qquad \text{for all }\, \varphi \in W_n \setminus \{0\},
    \end{equation}
    satisfy: $\dim (W_n) \leq M$.
  \item[(ii)] There exist $\lambda \in \mathbb{R}$, a subspace $Y$ of
    $E$ with $\dim (Y) \geq M+1$ and $\zeta > 0$ such that, for large enough
    $n \in \mathbb{N}$,
    \begin{equation}\label{L1*}
      \Phi''_{\rho}(u_n)[\varphi,\varphi] + \lambda |\varphi|^2
      \le -\zeta \|\varphi\|^2,
      \qquad \text{for all }\, \varphi \in Y.
    \end{equation}
  \end{itemize}
  Then $\lambda_n > \lambda$ for all large enough $n \in
  \mathbb{N}$. In particular, if (\ref{L1*}) holds for any
  $\lambda <0$, then
  $\displaystyle \liminf_{n \to \infty} \lambda_n \geq 0.$
\end{lemma}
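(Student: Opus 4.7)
The plan is to argue by contradiction: assume $\lambda_n \le \lambda$ along a subsequence, and use the subspace $Y$ of dimension at least $M+1$ provided by~(ii) to violate the dimension bound in~(i).

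First, I would fix $n$ large enough so that both (i) and (ii) apply and, in addition, $\zeta_n < \zeta$; the last condition is available because $\zeta_n \to 0^+$. Next, since $|\varphi|^2 \ge 0$, the assumption $\lambda_n \le \lambda$ gives $\lambda_n|\varphi|^2 \le \lambda|\varphi|^2$ for every $\varphi \in E$, so by (ii) one obtains, on $Y$,
\begin{equation*}
\Phi''_\rho(u_n)[\varphi,\varphi] + \lambda_n|\varphi|^2
\le \Phi''_\rho(u_n)[\varphi,\varphi] + \lambda|\varphi|^2
\le -\zeta\|\varphi\|^2 .
\end{equation*}
For $\varphi \in Y \setminus \{0\}$ this is strictly less than $-\zeta_n\|\varphi\|^2$, since $\zeta_n < \zeta$, so $Y$ satisfies the condition \eqref{LL-Hess crit*} appearing in (i). This forces $\dim(Y) \le M$, contradicting $\dim(Y) \ge M+1$. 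Hence $\lambda_n > \lambda$ for all sufficiently large $n$.

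For the $\liminf$ assertion, I would apply the statement just proved to every $\lambda < 0$ (with the corresponding subspace $Y$ and constant $\zeta$ provided by (ii)): this gives $\liminf_n \lambda_n \ge \lambda$ for each such $\lambda$, and letting $\lambda \uparrow 0$ yields $\liminf_n \lambda_n \ge 0$. I do not foresee a genuine obstacle: the argument is essentially a linear-algebraic comparison between a subspace of dimension $M+1$ on which the shifted Hessian is strongly negative and the Morse-type upper bound $M$ coming from (i). The only point requiring attention is the transition from the non-strict inequality in (ii) to the strict inequality demanded by~(i), and this is precisely what the hypothesis $\zeta_n \to 0^+$ is for.
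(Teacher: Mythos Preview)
Your proposal is correct and follows essentially the same argument as the paper's proof: assume $\lambda_n \le \lambda$ along a subsequence, use $\lambda_n|\varphi|^2 \le \lambda|\varphi|^2$ together with (ii) and $\zeta_n < \zeta$ to show that $Y$ itself satisfies \eqref{LL-Hess crit*}, contradicting the dimension bound in~(i). You are even slightly more careful than the paper in noting that $\zeta_n < \zeta$ is what turns the non-strict inequality of (ii) into the strict one required by~(i).
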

\begin{proof}
  Suppose by contradiction that $\lambda_n \leq \lambda$ along a
  subsequence still denoted $\{\lambda_n\}$.  Keep
    denoting $\{u_n\}$ and $\{\zeta_n\}$ the corresponding subsequences.
  From \eqref{L1*} we have,
\begin{equation*}
  \Phi''_{\rho}(u_n)[\varphi,\varphi]+\lambda_n|\varphi|^2
  \le \Phi''_{\rho}(u_n)[\varphi,\varphi] + \lambda |\varphi|^2
  \le -\zeta \|\varphi\|^2
  \qquad
  \text{for all } \varphi \in Y \setminus \{0\}.
\end{equation*}
Now, since $\zeta_n\to 0^+$, there exists $n_0 \in \mathbb{N}$ such that: $\forall n \geq n_0$,
$\zeta_n < \zeta$. Thus, for an arbitrary $n \geq n_0$, we obtain 
\begin{equation*}
  \Phi''_{\rho}(u_n)[\varphi,\varphi]+\lambda_n|\varphi|^2
  \le  -\zeta_n \|\varphi\|^2,
  \qquad \text{for all } \, \varphi \in Y \setminus \{0\},
\end{equation*}
in contradiction with \eqref{LL-Hess crit*} since $ \dim  (Y) \geq M+1$. 
\end{proof}

We will now focus on deriving sufficient conditions used to verify the
two hypotheses posed on the class of paths $\Gamma_N$ in Theorem
\ref{Objective1}. The following Lemma, directly inspired by
\cite[Remark~4.5]{PiVe}, deals with the first hypothesis of showing
that the set is non void.

\begin{lemma}\label{Pierotti-Verzini}
  Let $\{u_1,\dots, u_{N-1}\} \subset \M $ and
  $\{v_1,\dots, v_{N-1}\} \subset \M$ be orthogonal families
  for the inner product $(\cdot, \cdot)$.
  Setting the odd functions
  \begin{align*}
    \gamma_0 : \mathbb{S}^{N-2} \to \M \quad \text{by} \quad 
    \gamma_0(a_1,\dots,a_{N-1}) =  \sum_{i=1}^{N-1} a_i u_i
  \end{align*}
  and
  \begin{align*}
    \gamma_1 : \mathbb{S}^{N-2} \to \M \quad \text{by} \quad 
    \gamma_1(a_1,\dots,a_{N-1}) =  \sum_{i=1}^{N-1} a_i v_i ,
  \end{align*}
  the set $\Gamma_N$ defined by~\eqref{SMP1}
  is non void.
\end{lemma}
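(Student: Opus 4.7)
First I would verify that $\gamma_0$ and $\gamma_1$ genuinely take values in $\M$. For $a = (a_1,\dots,a_{N-1}) \in \mathbb{S}^{N-2}$, the orthogonality hypothesis $(u_i,u_j) = \mu\,\delta_{ij}$ gives
\begin{equation*}
  |\gamma_0(a)|^2 \;=\; \sum_{i,j} a_i a_j (u_i,u_j)
  \;=\; \mu \sum_i a_i^2 \;=\; \mu,
\end{equation*}
and the analogous computation works for $\gamma_1$. The naive linear interpolation $t\,\gamma_1(a) + (1-t)\,\gamma_0(a)$ will generically leave $\M$ (and can even vanish, for example when $v_i=-u_i$), so the strategy is to introduce an auxiliary odd map $\gamma_{1/2}\colon\mathbb{S}^{N-2}\to\M$ whose range is $H$-orthogonal to the ranges of both $\gamma_0$ and $\gamma_1$, and to connect $\gamma_0\leadsto\gamma_{1/2}\leadsto\gamma_1$ by two ``quarter-turn rotations'' that exploit $\cos^2 + \sin^2 = 1$ to remain on $\M$.

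To construct the auxiliary family, let $W := \linspan\{u_1,\dots,u_{N-1},v_1,\dots,v_{N-1}\}$, a finite-dimensional subspace of $E$. Since $E$ is infinite-dimensional, there exist $N-1$ linearly independent elements of $E$ whose $H$-projection onto $W$ vanishes. Applying Gram--Schmidt for the $H$ inner product (which preserves membership in $E$, since it only involves linear combinations) and rescaling by $\sqrt{\mu}$, I obtain $w_1,\dots,w_{N-1} \in E$ satisfying
\begin{equation*}
  (w_i,w_j) = \mu\,\delta_{ij}, \qquad
  (w_i,u_k) = 0, \qquad
  (w_i,v_k) = 0, \qquad \forall\, i,j,k.
\end{equation*}
The map $\gamma_{1/2}(a) := \sum_i a_i w_i$ is then odd and, by the same computation as for $\gamma_0$, takes values in $\M$.

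Finally, I would define the homotopy $\gamma\colon[0,1]\times\mathbb{S}^{N-2}\to E$ by
\begin{equation*}
  \gamma(t,a) :=
  \begin{cases}
    \cos(\pi t)\,\gamma_0(a) + \sin(\pi t)\,\gamma_{1/2}(a),
    & t\in[0,1/2],\\[1\jot]
    \cos\!\bigl(\pi(t-\tfrac{1}{2})\bigr)\,\gamma_{1/2}(a)
      + \sin\!\bigl(\pi(t-\tfrac{1}{2})\bigr)\,\gamma_1(a),
    & t\in[1/2,1].
  \end{cases}
\end{equation*}
The two formulas agree at $t=1/2$, so $\gamma$ is continuous on $[0,1]\times\mathbb{S}^{N-2}$; $\gamma(0,\cdot)=\gamma_0$ and $\gamma(1,\cdot)=\gamma_1$ by inspection; and oddness in $a$ is inherited from $\gamma_0$, $\gamma_{1/2}$, $\gamma_1$. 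The crucial point is that the cross-term in $|\gamma(t,a)|^2$ vanishes on each piece thanks to the orthogonality between the ranges of consecutive maps, which yields
\begin{equation*}
  |\gamma(t,a)|^2 \;=\; \cos^2(\pi t)\,\mu + \sin^2(\pi t)\,\mu \;=\; \mu
\end{equation*}
on $[0,1/2]$, and analogously on $[1/2,1]$ using $(w_i,v_k)=0$. Hence $\gamma\in\Gamma_N$, proving $\Gamma_N\neq\emptyset$. No step looks genuinely hard: the only conceptual ingredient is the detour through $\gamma_{1/2}$, necessary precisely because the two given families are not assumed orthogonal to each other, and its construction rests only on $\dim E = \infty$.
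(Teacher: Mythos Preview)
Your proof is correct, but it follows a genuinely different route from the paper's. The paper works entirely inside the finite-dimensional subspace $U := \linspan\{u_1,\dots,u_{N-1},v_1,\dots,v_{N-1}\}$ of dimension $d \le 2(N-1)$: since both families are $H$-orthonormal (up to the factor $\sqrt{\mu}$), there is an $R \in O(d)$ with $Ru_i = v_i$, and after a sign adjustment one may take $R \in \SO(d)$. The path-connectedness of $\SO(d)$ then yields $\tilde{\gamma}:[0,1]\to\SO(d)$ from $\Id$ to $R$, and $\gamma(t,a) := \sum_i a_i\,\tilde{\gamma}(t)u_i$ does the job, staying on $\M$ automatically because each $\tilde{\gamma}(t)$ is an isometry of $(U,(\cdot,\cdot))$.

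Your argument instead exploits the ambient infinite-dimensionality of $E$ (explicitly assumed in the abstract setting) to manufacture a third orthonormal family $\{w_i\}$ that is $H$-orthogonal to both given families, and then concatenates two explicit ``quarter-turn'' homotopies through $\gamma_{1/2}$. This is more hands-on and avoids invoking the connectedness of $\SO(d)$; the price is that the path leaves the span of the original data and genuinely needs $\dim E = \infty$, whereas the paper's construction would work verbatim in any finite-dimensional $E$ large enough to contain $U$. Both approaches are short; the paper's is slightly more conceptual, yours slightly more elementary. One small remark: your claim that $E$ contains $N-1$ independent vectors $H$-orthogonal to $W$ deserves one line of justification --- it holds because the $H$-orthogonal projection $E \to W$ is a continuous linear map with finite-dimensional range, so its kernel has finite codimension in the infinite-dimensional space $E$.
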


\begin{proof}
  We define the subspace
  $U = \linspan\{u_1, \dotsc, u_{N-1}, v_1, \dotsc, v_{N-1} \}$ and
  let $d = \dim (U) \le 2 (N-1)$.
  Let $R : U \to U$ be a linear operator such that
  $Ru_i = v_i$ for $i = 1, 2, \dots, N-1$.  Possibly after
  permutation of the family $\{v_n\}$ , we can choose $R$ such that
  $R \in \SO(d)$ (there may be different choices of $R$).
  Now, since $SO(d)$ is pathwise-connected
  (see e.g.~\cite[Section~10.5]{Se}), there exists a continuous path
  $\tilde{\gamma} : [0,1] \to \SO(d)$ such that
  $\tilde{\gamma}(0) = \Id$ and $\tilde{\gamma}(1) = R$.  Let us
  define the map,
  \begin{equation*}
    \gamma \colon [0,1]\times \mathbb{S}^{N-2} \to \M:
    (t,a_1,\dots,a_{N-1})
    \mapsto \sum_{i=1}^{N-1} a_i\,\tilde{\gamma}(t)(u_i).
  \end{equation*}
  It is clear that $\gamma$ is continuous, $\gamma(t, \cdot)$ is odd
  for all $t$, and $\gamma(0, \cdot) = \gamma_0$,
  $\gamma(1, \cdot) = \gamma_1$.
\end{proof}

We now turn to the second hypothesis, which requires finding conditions to ensure that the strict inequality \eqref{SMP2} in Theorem \ref{Objective1} is satisfied. At this point, we shall rely on some results from \cite{BaLiLi}. In particular, the next Lemma is essentially~\cite[Lemma~3.2]{BaLiLi}.

\begin{lemma}\label{lemma:intersection}
  Let $L_1, L$ be finite dimensional normed vector spaces such that
  $\dim(L_1) < \dim(L).$ Let $S = \{u\in L \mid \|u\| = 1\}$,
  $\alpha \in \R$ and
  $H=(H_1,H_2)\colon [0,1]\times S \to \R \times L_1$ be a
  continuous map such that, for all $t$,
  $u \mapsto H_1(t,u)$ is even, $u \mapsto H_2(t,u)$ is
  odd, and
  \begin{equation*}
    H_1(0,u) < \alpha < H_1(1,u), \quad \text{for } u \in S.
  \end{equation*}
  Then there exists $(t,u)\in [0,1]\times S$ such that
  $H(t,u)=(\alpha,0)$.
\end{lemma}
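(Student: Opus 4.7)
My plan is to argue by contradiction via a suspension-and-Borsuk--Ulam scheme that exploits the mixed even/odd symmetry of $H$. Suppose that $H(t,u) \ne (\alpha, 0)$ for every $(t,u) \in [0,1] \times S$ and set $n := \dim L$, $m := \dim L_1$ with $m < n$. The continuous map $F(t,u) := (H_1(t,u) - \alpha, H_2(t,u))$ is then non-vanishing and has values in $\R \times L_1 \cong \R^{m+1}$. The parity assumptions on $H$ give the equivariance $F(t,-u) = R F(t,u)$, where $R(y_0, y_1) := (y_0, -y_1)$ is the reflection of $\R \times L_1$ fixing the first coordinate. Moreover, the boundary hypothesis on $H_1$ forces the first coordinate of $F$ to be strictly negative on $\{0\} \times S$ and strictly positive on $\{1\} \times S$.

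Next, I would normalize to $G := F/\|F\| : [0,1] \times S \to \mathbb{S}^m$. Then $G$ maps $\{0\} \times S$ into the open hemisphere $\{y_0 < 0\}$ and $\{1\} \times S$ into $\{y_0 > 0\}$, both of which are $R$-equivariantly contractible (by straight-line-and-renormalize) to the respective fixed points $\mp e_1$. Gluing $R$-equivariant contracting homotopies at each end with $G$ yields a continuous $R$-equivariant map $\widehat G : \Sigma S \to \mathbb{S}^m$ from the unreduced suspension. Identifying $\Sigma S \cong \mathbb{S}^n \subset \R \times L$, the induced involution $(t,u) \mapsto (t,-u)$ becomes the reflection $(y_0,v) \mapsto (y_0,-v)$, so $\widehat G$ is an equivariant map between two spheres carrying isomorphic ``reflection'' $\Z/2$-actions (each with a 2-point fixed set) and sends fixed set to fixed set.

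The contradiction should then come from a dimension obstruction. Since the complement of the fixed set in $(\mathbb{S}^k, R)$ $\Z/2$-equivariantly deformation-retracts onto the equatorial $\mathbb{S}^{k-1}$ with antipodal action, away from the target fixed set $\widehat G$ induces an antipodally equivariant map $\mathbb{S}^{n-1} \to \mathbb{S}^{m-1}$, violating classical Borsuk--Ulam since $m - 1 < n - 1$. Equivalently, a Fadell--Husseini $\Z/2$-index comparison yields $n = \mathrm{ind}(\mathbb{S}^n, R) \le \mathrm{ind}(\mathbb{S}^m, R) = m$, contradicting $m < n$.

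The main obstacle is in making the reduction to antipodal Borsuk--Ulam fully rigorous, since the deformation-retraction argument is only defined where $\widehat G$ avoids the target fixed points $\pm e_1$. This is handled either by an equivariant transversality / perturbation argument, showing that $\widehat G^{-1}(\{\pm e_1\})$ can be assumed to have Krasnoselskii genus at most $m$, so that its symmetric complement in the source carries genus at least $m+1$ on which antipodal Borsuk--Ulam applies; or, more directly, by invoking equivariant cohomological index theory, which encapsulates precisely the dimensional obstruction for non-free $\Z/2$-actions.
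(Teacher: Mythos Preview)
The paper does not actually prove this lemma: it only states the result and attributes it to \cite[Lemma~3.2]{BaLiLi}. So there is no ``paper's proof'' to compare your argument against, and your attempt has to be assessed on its own.

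Your construction up to the equivariant map $\widehat G : (\mathbb{S}^n,\sigma) \to (\mathbb{S}^m,R)$ is correct and is a natural way to attack the problem. You also correctly isolate the real difficulty: both $\Z/2$-actions are non-free (each has a two-point fixed set, and $\widehat G$ sends fixed set to fixed set), so the naive reduction to antipodal Borsuk--Ulam is blocked. However, \emph{both} of the fixes you propose break down as written.

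For the genus route, the set $\widehat G^{-1}(\{\pm e_1\})$ necessarily contains the two suspension poles, which are fixed points of the source involution. By the standard convention, any symmetric set containing a fixed point has Krasnosel'ski\u{\i} genus $+\infty$, so it cannot ``be assumed to have genus at most $m$'' by any perturbation. Even if you excise the poles first, transversality makes $\widehat G^{-1}(\{\pm e_1\})$ an invariant submanifold of dimension $n-m$, and there is no general inequality bounding the genus of the \emph{complement} from below by $n$ minus the genus of this set; genus is subadditive for unions, not for complements.

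For the Fadell--Husseini route, the index of a $\Z/2$-space with nonempty fixed-point set is the zero ideal in $H^*_{\Z/2}(\mathrm{pt};\Z/2)$ (restriction to a fixed point is injective), so $\operatorname{Ind}(\mathbb{S}^n,\sigma)=\operatorname{Ind}(\mathbb{S}^m,R)=\{0\}$ and monotonicity gives no information. The assertion $\operatorname{ind}(\mathbb{S}^k,R)=k$ is simply false for the standard Fadell--Husseini index; you would need a \emph{relative} or \emph{ideal-valued pair} index adapted to actions with fixed points, and you have not set one up.

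A workable completion stays on the free part from the start: keep $F=(H_1-\alpha,H_2)$ on the cylinder $[0,1]\times S$, where the involution $(t,u)\mapsto(t,-u)$ is free. The odd component $H_2(t,\cdot):S\to L_1$ forces, by the Borsuk--Ulam bound on zero sets, $\operatorname{genus}\bigl(\{H_2=0\}\cap(\{t\}\times S)\bigr)\ge n-m$ for every $t$; one then uses a linking/degree argument (as in \cite{BaLiLi}) to produce a zero of $H_1-\alpha$ along this family, rather than passing to a non-free suspension.
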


In the proof of our next result we are inspired by \cite[Lemma~3.3]{BaLiLi},
see also \cite[Lemma~2.3]{BaVa}.

\begin{theorem}\label{prop:MPcondition}
Let $\Phi : E \to \R$ be a continuous even functional,
    $d \in \N$, and $\gamma_i : \mathbb{S}^d \to S_\mu$, $i = 1,2$,
    be two odd functions.  Assume that the set
    \begin{equation*}
      \Gamma
      := \bigl\{ \gamma \in C([0,1] \times \mathbb{S}^d, S_\mu) \bigm|
      \forall t \in [0,1],\ \gamma(t,\cdot) \text{ is odd, }
      \gamma(0,\cdot) = \gamma_0, \text{ and }
      \gamma(1,\cdot) = \gamma_1 \bigr\}
    \end{equation*}
    is not empty.  Assume further that there exists a
    continuous even functional
    $J : E \to \R$, $\beta \in \R$, and $W \subset E$ a subspace with
    $\dim W \le d$ such that
  
  \begin{itemize}
  \item[(H1)] $J(\gamma_0(s)) < \beta < J(\gamma_1(s))$, for all
    $s \in \mathbb{S}^d$;
  \item[(H2)]
$\displaystyle \max_{s\in \mathbb{S}^d}
    \max\bigl\{\Phi(\gamma_0(s)), \Phi(\gamma_1(s))\bigr\}
    < \inf_{u \in B} \Phi(u)$ \quad where \quad $B := \{ u \in S_\mu \cap W^\perp \mid J(u) = \beta \}$.
    
  \end{itemize}
  Then
  \begin{equation}\label{SMP2L}
    c := \inf_{\gamma\in\Gamma} \, \max_{(t, s) \in [0,1] \times \mathbb{S}^d}
    \Phi(\gamma(t,s))
    \ge \inf_{u \in B} \Phi(u).
  \end{equation}
\end{theorem}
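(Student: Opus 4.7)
The plan is to prove that, for every $\gamma \in \Gamma$, the image of $\gamma$ meets the set $B$. Once this is established, we have $\max_{(t,s)} \Phi(\gamma(t,s)) \ge \Phi(\gamma(t_0,s_0)) \ge \inf_{u \in B} \Phi(u)$ for any intersection point $(t_0,s_0)$, and passing to the infimum over $\gamma \in \Gamma$ yields \eqref{SMP2L}. Observe that hypothesis (H2) is not strictly needed to obtain \eqref{SMP2L} itself; it is included in the statement because, once combined with the conclusion, it upgrades \eqref{SMP2L} to the strict inequality \eqref{SMP2} required when one subsequently applies Theorem~\ref{Objective1}.

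To build the intersection, I would invoke Lemma~\ref{lemma:intersection} with $L = \R^{d+1}$ (so that its unit sphere is exactly $\mathbb{S}^d$), with $L_1 = W$ (whose dimension is at most $d < d+1 = \dim L$), and with $\alpha = \beta$. For a given $\gamma \in \Gamma$, define
\begin{equation*}
H = (H_1, H_2) : [0,1] \times \mathbb{S}^d \to \R \times W, \qquad H_1(t,s) = J(\gamma(t,s)), \quad H_2(t,s) = P_W(\gamma(t,s)),
\end{equation*}
where $P_W : E \to W$ denotes the orthogonal projection in the Hilbert space $E$ (this is the same inner product underlying the definition of $W^\perp$ in $B$). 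The map $H$ is continuous. For each fixed $t \in [0,1]$, $H_1(t,\cdot)$ is even because $J$ is even and $\gamma(t,\cdot)$ is odd, while $H_2(t,\cdot)$ is odd because $P_W$ is linear and $\gamma(t,\cdot)$ is odd. Hypothesis (H1) translates exactly into $H_1(0,s) < \beta < H_1(1,s)$ for every $s \in \mathbb{S}^d$.

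Lemma~\ref{lemma:intersection} then produces $(t_0, s_0) \in [0,1] \times \mathbb{S}^d$ with $H(t_0, s_0) = (\beta, 0)$. This tells us that $\gamma(t_0,s_0) \in S_\mu$ lies in $W^\perp$ and satisfies $J(\gamma(t_0,s_0)) = \beta$; hence $\gamma(t_0,s_0) \in B$, which is the intersection we sought. The main (and essentially only) obstacle is the setup itself: one must identify $W^\perp$ with the orthogonal complement in the ambient Hilbert space $E$ so that $P_W$ is a legitimate continuous linear map, and one must verify that the codimension count $\dim W \le d < \dim L$ matches the dimensional requirement of Lemma~\ref{lemma:intersection}. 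Once these points are in place, checking evenness, oddness and continuity is routine, and the conclusion \eqref{SMP2L} follows.
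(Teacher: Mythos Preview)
Your proposal is correct and follows essentially the same approach as the paper: both define $H(t,s) = \bigl(J(\gamma(t,s)), P_W(\gamma(t,s))\bigr)$, apply Lemma~\ref{lemma:intersection} with $L = \R^{d+1}$, $L_1 = W$, $\alpha = \beta$, and conclude that $\gamma(t_0,s_0) \in B$. Your side remark that (H2) is not needed for \eqref{SMP2L} itself but only to upgrade it to a strict inequality is also accurate.
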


\begin{proof}
  Let $\gamma \in \Gamma$ be arbitrary and $P\colon E\to W$ be
  the orthogonal projection. Define
  \begin{equation*}
    h\colon \M \to \R \times W : u \mapsto (J(u), Pu)
    \quad\text{and}\quad
    H = h\circ \gamma\colon [0,1]\times \mathbb{S}^d \to \R\times W.
  \end{equation*}
  Setting $L = \R^{d+1}$, $S = \mathbb{S}^{d}$, $L_1= W$ and
  $\alpha = \beta$, we see that $L$, $S$, $L_1$, $\alpha$
  and $H$ satisfy all
  the conditions of Lemma \ref{lemma:intersection}.
  Therefore there exists
  $(t_0,s_0) \in [0,1] \times \mathbb{S}^{d}$ such that
  $H(t_0,s_0) = (\beta, 0)$. That is $\gamma(t_0,s_0) \in B$.
  One deduces that
  \begin{equation*}
    \max_{(t,s)\in [0,1]\times \mathbb{S}^d} \Phi(\gamma(t,s))
    \ge \Phi(\gamma(t_0,s_0))
    \ge \inf_{u \in B} \Phi(u) .
  \end{equation*}
  This proves that \eqref{SMP2L} holds since $\gamma \in \Gamma$ is
  arbitrary.
\end{proof}

\section{A Poho\v{z}aev type identity and its consequences}
\label{Preliminaries_II}

In this section we focus on deriving properties of solutions
to~\eqref{1.2LL} when $\lambda=0$. Observe that since we are assuming that the
compact core is non trivial: $\G$ has at least one bounded edge and
thus there is at least one edge where the nonlinearity is acting.
Some considerations in this section are slightly more general than
what is needed to prove Theorem \ref{thm: Whole ex}.

\smallskip

First let us recall that if $u$ is solution to 
$- u'' + \lambda u = \rho|u|^{p-2}u$
on some interval $I \subseteq \R$, then the function
\begin{equation*}
  H_u(x) := \frac{1}{2} \bigl(u'(x)\bigr)^2 + V_{\lambda}(u(x))
  \quad \text{where} \quad
  V_{\lambda}(u) := \frac{\rho}{p} |u|^p - \frac{\lambda}{2} |u|^2
\end{equation*}
is constant on $I$. Indeed,
$H_u'(x) := u'(x) \cdot (u''(x) + V_{\lambda}'(u(x))) = 0$.
We call this constant $H_u$ the \emph{ODE energy}
of the solution $u$ on $I$.

\begin{proposition}[Poho\v{z}aev identity on metric graphs]
  \label{thm:pohozaev}
  Let $\G$ be a metric graph with finitely many edges (bounded or
  not).  Let $p > 2$, $\lambda \in \R$, and $u \in H^1(\G)$ be a
  solution to \eqref{1.2LL}.  For each bounded edge $e$ of $\G$, let
  the {\it ODE energy} of the solution $u$ on $e$ be given by
  \begin{equation}\label{DefH}
    H_u(e) := H_u(x)
    = \frac{1}{2} |u'(x)|^2
    + \frac{\rho}{p} |u(x)|^p - \frac{\lambda}{2} |u(x)|^2,
  \end{equation}
  where $x$ is an arbitrary point of $e$.  Finally, define
  \begin{equation}
    \label{eq:defP}
    P_{\rho}(u,\G) := \sum_{e \text{ is a bounded edge of $\G$}} \ell_e\, H_u(e)
  \end{equation}
  where $\ell_e$ is the length of the edge $e$. 
  Then, one has
  \begin{equation*}
    \frac{1}{2} \| u' \|_{L^2(\G)}^2
    + \frac{\rho}{p} \|\kappa u \|_{L^p(\G)}^p
    = \frac{\lambda}{2} \| u \|_{L^2(\G)}^2 + P_{\rho}(u,\G).
  \end{equation*}
\end{proposition}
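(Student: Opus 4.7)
The plan is to exploit the edge-wise conservation of the ODE energy, combined with the behavior of $u$ at infinity on each half-line. On any bounded edge $e$, $\kappa \equiv 1$, so $u$ satisfies $-u'' + \lambda u = \rho|u|^{p-2}u$ on $e$; as solutions of this ODE are smooth in the interior of $e$, a direct differentiation gives
\begin{equation*}
H_u'(x) = u'(x)\bigl(u''(x) - \lambda u(x) + \rho|u(x)|^{p-2}u(x)\bigr) = 0.
\end{equation*}
Hence $H_u$ is constant on $e$, equal to $H_u(e)$, so
\begin{equation*}
\ell_e H_u(e) = \int_e H_u(x) \intd x = \frac{1}{2}\|u'\|_{L^2(e)}^2 + \frac{\rho}{p}\|u\|_{L^p(e)}^p - \frac{\lambda}{2}\|u\|_{L^2(e)}^2.
\end{equation*}
Summing over bounded edges and using $\|\kappa u\|_{L^p(\G)}^p = \sum_{e\text{ bounded}} \|u\|_{L^p(e)}^p$ yields
\begin{equation*}
P_\rho(u, \G) = \frac{1}{2}\|u'\|_{L^2(\K)}^2 + \frac{\rho}{p}\|\kappa u\|_{L^p(\G)}^p - \frac{\lambda}{2}\|u\|_{L^2(\K)}^2.
\end{equation*}

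It then remains to establish $\|u'\|_{L^2(\G\setminus\K)}^2 = \lambda\,\|u\|_{L^2(\G\setminus\K)}^2$. On each half-line $e$, $\kappa \equiv 0$, so $u$ solves the linear equation $u'' = \lambda u$; multiplying by $u'$ shows that the reduced quantity $\tilde H_u(x) := \frac{1}{2}(u'(x))^2 - \frac{\lambda}{2}u(x)^2$ is constant on $e$. The key point is that this constant vanishes: since $u|_e \in H^1([0, +\infty))$ satisfies a constant-coefficient linear ODE, an elementary case analysis on the sign of $\lambda$ shows that either $\lambda > 0$ and $u(x) = C e^{-\sqrt\lambda\, x}$, or $u \equiv 0$ on $e$; in both cases $u(x),u'(x) \to 0$ as $x \to +\infty$, forcing $\tilde H_u \equiv 0$. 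Integrating the pointwise identity $(u')^2 = \lambda u^2$ on each half-line and summing delivers the desired equality.

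Adding the two contributions and rearranging produces the identity claimed in the proposition. The delicate step is the half-line analysis: although $P_\rho$ by definition involves only bounded edges, matching it with the global $H^1$- and $L^p$-norms requires the cancellation $\frac{1}{2}\|u'\|^2 - \frac{\lambda}{2}\|u\|^2 = 0$ on the non-compact part of $\G$, which is driven by the $H^1$-integrability of $u$ at infinity (and, when $\lambda \le 0$, forces $u$ to vanish identically on every half-line). Notably, the Kirchhoff vertex conditions play no direct role in the argument: the identity follows purely from pointwise ODE information on each edge combined with the growth condition at infinity.
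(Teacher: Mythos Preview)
Your proof is correct and follows essentially the same route as the paper's: integrate the constant ODE energy over each edge and sum. The paper's proof is more terse on the half-line case, simply asserting that the edge identity holds there with the convention $\ell_e H_u(e) := 0$ because $\kappa|_e = 0$ and $u \in H^1(\G)$; your explicit case analysis on the sign of $\lambda$ spells out exactly why this is so.
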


\begin{proof}
  Let $e$ be a bounded edge of $\G$.  We identify it with the
  interval $\intervalcc{0, \ell_e}$.
  Integrating \eqref{DefH} on $e$, we get
  \begin{equation}
    \label{pohozaev_e}
    \frac{1}{2} \| u' \|_{L^2(e)}^2
    + \frac{\rho}{p} \| \kappa u \|_{L^p(e)}^p
    = \frac{\lambda}{2} \| u \|_{L^2(e)}^2 + \ell_e \, H_u(e).
  \end{equation}
  Note that \eqref{pohozaev_e} also holds when $e$ is a half-line
  if in this
  case we set $\ell_e \, H_u(e) := 0$  since $\kappa|_e = 0$ and $u \in H^1(\G)$.
  We end the proof by taking
  the sum of \eqref{pohozaev_e} over all edges of $\G$ (whether
  bounded or not).
\end{proof}

\begin{lemma}
  \label{lemma:link}
  Let $\G$ be a metric graph with finitely many edges (bounded or
  not).  Let $p > 2$ and $\lambda \in \R$. Let $u \in H^1(\G)$ be a
  solution to \eqref{1.2LL}.  Then, one has
  \begin{equation*}
    E_{\rho}(u, \G)
    = \frac{(p-6) \lambda}{2(p+2)} \| u \|_{L^2(\G)}^2
    + \frac{p-2}{p+2} P_{\rho}(u,\G),
  \end{equation*}
  where $P_{\rho}(u,\G)$ is defined by \eqref{DefH}--\eqref{eq:defP}.
\end{lemma}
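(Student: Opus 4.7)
The plan is to reduce the identity to a linear combination of two scalar relations involving the three quantities $A := \|u'\|_{L^2(\G)}^2$, $B := \|u\|_{L^2(\G)}^2$, and $C := \|\kappa u\|_{L^p(\G)}^p$, and then solve for the combination appearing in $E_\rho(u,\G) = \tfrac{1}{2}A - \tfrac{\rho}{p}C$.

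The first relation is a Nehari-type identity: testing \eqref{1.2LL} against $u$, integrating by parts on every edge, and noticing that the boundary terms collect as $\sum_\vv \sum_{\edge \incident \vv} \tfrac{\mathrm{d}u_\edge}{\mathrm{d}x}(\vv)\, u(\vv) = 0$ thanks to continuity of $u$ at the vertices and the Kirchhoff condition, yields
\begin{equation*}
  A + \lambda B = \rho C.
\end{equation*}
(A small technical remark: since $\G$ has at least one half-line, the boundary contribution at infinity vanishes because $u \in H^1$ forces $u$ and $u'$ to decay suitably; this is standard.)

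The second relation is exactly the Poho\v{z}aev identity from Proposition \ref{thm:pohozaev}, which can be rewritten as
\begin{equation*}
  \tfrac{1}{2} A + \tfrac{\rho}{p} C = \tfrac{\lambda}{2} B + P_\rho(u,\G).
\end{equation*}

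From the Nehari identity, $A = \rho C - \lambda B$; substituting this into the Poho\v{z}aev identity gives $\rho C \cdot \tfrac{p+2}{2p} = \lambda B + P_\rho(u,\G)$, hence
\begin{equation*}
  \rho C = \tfrac{2p}{p+2}\bigl(\lambda B + P_\rho(u,\G)\bigr),
  \qquad
  A = \tfrac{(p-2)\lambda B + 2p\,P_\rho(u,\G)}{p+2}.
\end{equation*}
Plugging these expressions into $E_\rho(u,\G) = \tfrac{1}{2}A - \tfrac{1}{p}\,\rho C$ and simplifying produces exactly $\tfrac{(p-6)\lambda}{2(p+2)} B + \tfrac{p-2}{p+2} P_\rho(u,\G)$. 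There is no real obstacle here: the entire argument is a two-equation linear system, with the only mildly delicate point being the careful use of Kirchhoff's condition to justify vanishing of boundary terms when testing with $u$.
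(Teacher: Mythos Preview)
Your proof is correct and follows essentially the same approach as the paper: derive the Nehari identity \eqref{Nehari} by testing against $u$, combine it with the Poho\v{z}aev identity of Proposition~\ref{thm:pohozaev} to solve for $\|u'\|_{L^2(\G)}^2$ and $\rho\|\kappa u\|_{L^p(\G)}^p$, and then substitute into the definition of $E_\rho$. One minor quibble: the lemma does not assume $\G$ has a half-line, so your parenthetical remark is unnecessary (if there are no half-lines there is nothing to check at infinity, and if there are, the argument you give is fine).
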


\begin{proof}
  First note that multiplying
  $-u'' + \lambda u = \rho \kappa(x) |u|^{p-2}u$ by $u$ and
  integrating over $\G$ (taking into account the Kirchhoff boundary
  conditions) we get
  \begin{equation}\label{Nehari}
    \| u' \|_{L^2(\G)}^2
    + \lambda \| u \|_{L^2(\G)}^2
    = \rho \| \kappa u \|_{L^p(\G)}^p.
  \end{equation}
  From Proposition~\ref{thm:pohozaev} and \eqref{Nehari}, we obtain
  \begin{align*}
    \| u' \|_{L^2(\G)}^2
    &= \frac{(p-2) \lambda}{p+2} \| u \|_{L^2(\G)}^2
      + \frac{2p}{p+2} P_{\rho}(u,\G),\\
    \rho \| \kappa u \|_{L^p(\G)}^p
    &= \frac{2p \lambda}{p+2} \| u \|_{L^2(\G)}^2
      + \frac{2p}{p+2} P_{\rho}(u,\G).
  \end{align*}
  Thus
  \begin{equation*}
    E_{\rho}(u, \G)
    = \frac{1}{2} \| u' \|_{L^2(\G)}^2
    - \frac{\rho}{p} \| \kappa u \|_{L^p(\G)}^p
    = \frac{(p-6) \lambda}{2(p+2)} \| u \|_{L^2(\G)}^2
    + \frac{p-2}{p+2} P_{\rho}(u,\G).
    \qedhere
  \end{equation*}
\end{proof}


Let us now establish some relationships between the ODE energy $H_u$
of a solution $u$ on an interval and its $L^2$-norm in the case
$\lambda = 0$. 

\begin{lemma}
  \label{bound_L2_norm}
  Let $\alpha > 0$ and $p \ge 2$.  Let
  $u: \R \to \R$ be an $\tau$-periodic\footnote{I.e. so that
    $u(x + \tau) = u(x)$ for all $x \in \R$.
    It is not necessary for $\tau$ to be the minimum period.} solution of
  \begin{equation*}
    -u'' = \alpha |u|^{p-2} u.
  \end{equation*}
  for some $\tau > 0$.  Let $H_u$ be the \emph{ODE energy} of the
  solution $u$.
  Then,
  \begin{equation}
    \label{estimates_L_infinity_norm}
    \frac{\tau}{8} \Bigl( \frac{p H_u}{\alpha} \Bigr)^{2/p}
    = \frac{\tau}{8} \|u\|_{L^\infty}^2
    \le \int_0^{\tau} |u(x)|^2 \intd x
    \le \tau \|u\|_{L^\infty}^2
    = \tau \Bigl( \frac{p H_u}{\alpha} \Bigr)^{2/p}.
  \end{equation}
\end{lemma}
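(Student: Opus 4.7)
My plan is to derive the two equalities in \eqref{estimates_L_infinity_norm} from the ODE energy evaluated at a maximum point of $|u|$, and then to obtain the non-trivial lower bound by combining a pointwise estimate on the set $\{|u|\ge \|u\|_{L^\infty}/2\}$ with a lower bound on the measure of this set deduced from an elementary phase-plane analysis of the orbit.

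Set $M:=\|u\|_{L^\infty}$. By continuity and $\tau$-periodicity, $|u|$ attains $M$ at some $x^*\in [0,\tau]$. If $M=0$, then $u\equiv 0$ and the statement is a string of equalities of zeros, so I may assume $M>0$. Since $|u|^2$ is maximized at $x^*$, we have $u(x^*)u'(x^*)=0$, hence $u'(x^*)=0$; evaluating $H_u=\tfrac{1}{2}(u')^2+\tfrac{\alpha}{p}|u|^p$ at $x^*$ yields $M^p=pH_u/\alpha$, proving the two equalities. The upper bound $\int_0^\tau|u|^2\intd x\le \tau M^2$ is trivial.

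For the lower bound, set $A:=\{x\in[0,\tau] : |u(x)|\ge M/2\}$, so that
\[
  \int_0^\tau |u|^2\intd x \ge \frac{M^2}{4}|A|;
\]
it suffices to prove $|A|\ge \tau/2$. Since $u$ is non-constant and continuous, its set of periods is of the form $T^*\Z$ for some fundamental period $T^*>0$, so $\tau=kT^*$ for some $k\in\N$, and by periodicity $|A|/\tau=|A\cap [0,T^*]|/T^*$. Any non-trivial periodic orbit of $-u''=\alpha|u|^{p-2}u$ lies on a level set $\{H_u=\mathrm{const}\}$, a closed curve through $(\pm M,0)$ and $(0,\pm\sqrt{2H_u})$; the symmetries $u\mapsto -u$ and time reversal split $[0,T^*]$ into four monotone arcs of equal duration $T^*/4$, each traversing $u$ strictly monotonically between $0$ and $\pm M$. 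On any such arc, the ODE energy gives $|u'(u)|=\sqrt{(2\alpha/p)(M^p-|u|^p)}$, a decreasing function of $|u|\in[0,M]$, so $1/|u'(u)|$ is increasing. Therefore
\[
  T_{M/2\to M}:=\int_{M/2}^M\frac{du}{|u'(u)|}
  \ \ge\ \int_0^{M/2}\frac{du}{|u'(u)|}=:T_{0\to M/2},
\]
and since each of the four arcs contributes $T_{M/2\to M}$ to $|A\cap[0,T^*]|$, one gets
\[
  |A\cap[0,T^*]|=4T_{M/2\to M}
  \ \ge\ 2(T_{M/2\to M}+T_{0\to M/2})=T^*/2,
\]
whence $|A|\ge \tau/2$ and the lower bound follows.

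The main obstacle is the time-comparison $T_{M/2\to M}\ge T_{0\to M/2}$, which formalizes the intuition that the periodic orbit ``slows down'' near the turning points $\pm M$; it is elementary once the orbit in the phase plane is described. Minor care is needed to reduce a general $\tau$-periodicity to the fundamental period $T^*$ and to dispose of the trivial constant solution $u\equiv 0$.
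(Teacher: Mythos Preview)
Your proof is correct and takes essentially the same approach as the paper's: both establish the equalities by evaluating $H_u$ at an extremum of $u$, and both obtain the lower bound by showing $|A|\ge\tau/2$ via the speed comparison --- the particle is slower where $|u|\ge M/2$ (since $|u'|$ is smaller there by energy conservation) than where $|u|\le M/2$, and the two $u$-regions have equal length. The paper phrases this more informally, without the explicit reduction to the fundamental period or the time-integral parametrization, but the underlying idea is identical.
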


\begin{proof}
  It is advantageous to consider that we are studying periodic
  solutions of the equation of motion in the potential well defined by
  $$V(u) := \frac{\alpha |u|^p}{p},$$
  since the equation reads $u'' = -V'(u)$.  The \emph{ODE energy} of
  the solution $u$, which is given here by
  \begin{equation*}
    H_u := \tfrac{1}{2} (u')^2 + V(u), 
  \end{equation*}
  is constant with respect to time.  We immediately obtain that
  \begin{equation*}
    \tfrac{1}{2} (u'(x))^2
    \begin{cases}
      \le H_u - V(\|u\|_{L^\infty}/2)
      &\text{for all } x \in \intervalcc{0, \tau}
        \text{ such that } |u(x)| \ge \|u\|_{L^\infty}/2,\\
      \ge H_u - V(\|u\|_{L^\infty}/2)
      &\text{for all } x \in \intervalcc{0, \tau}
        \text{ such that } |u(x)| \le \|u\|_{L^\infty}/2.
    \end{cases}
  \end{equation*}
  Therefore, a particle in the potential well always has a smaller
  speed (in absolute value) when going through the region
  $\intervalcc{-|u|_{\infty}, -|u|_{\infty}/2} \cup
  \intervalcc{|u|_{\infty}/2, |u|_{\infty}}$ than when going through
  the region $\intervalcc{-|u|_{\infty}/2, |u|_{\infty}/2}$.  Since
  both those regions have the same length, we deduce that
  \begin{equation*}
    |A| \ge \tfrac{1}{2}\tau
    \qquad \text{where }
    A := \bigl\{ x \in \intervalcc{0, \tau}
    \bigm| |u(x)| \ge \tfrac{1}{2} \|u\|_{L^\infty}
    \bigr\}
  \end{equation*}
  as the particle will spend at least half its time in the region
  where it has a slower speed.
		
  Regarding the inequalities in \eqref{estimates_L_infinity_norm},
  the upper bound is trivial, and the lower bound follows from the
  inequalities
  \begin{equation*}
    \int_0^{\tau} |u(x)|^2 \intd x
    \ge \int_{A} |u(x)|^2 \intd x
    \ge \tfrac{1}{4} \|u\|_{L^\infty}^2 \, |A|
    \ge \tfrac{1}{8} \tau\, \|u\|_{L^\infty}^2.
  \end{equation*}
  Finally, the equalities in \eqref{estimates_L_infinity_norm}
  follow from the fact that,
  for periodic solutions, one has
  \begin{equation*}
    H_u = V(\|u\|_{L^\infty}) = \frac{\alpha \|u\|_{L^\infty}^p}{p},
  \end{equation*}
  since the derivative of the solution vanishes at any maximum or
  minimum point.
\end{proof}

\begin{lemma}\label{lem_determined}
  Let $\alpha > 0$ and $p \ge 2$.  The solution of equation
  \begin{equation*}
    -u'' = \alpha |u|^{p-2} u,
  \end{equation*}
  with initial conditions $u'(0) = 0$ and $u(0) = u_0 > 0$ is
  $\tau(u_0)$-periodic, where
  \begin{equation*}
    \tau(u_0) := \frac{C(p)}{\sqrt{\alpha}} u_0^{(2-p)/{2}}
  \end{equation*}
  for some constant $C(p) > 0$.  Its ODE energy is given by
  $H_u = V(u_0) = \frac{\alpha}{p} u_0^p$.  Moreover, it is (up to
  time translations) the unique solution of the ODE with this energy,
  and the unique solution of the ODE with this period.
\end{lemma}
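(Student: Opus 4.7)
The plan is to exploit the conservation of the ODE energy $H_u$ together with a single scaling substitution to read off both the periodicity and the explicit formula for $\tau(u_0)$, and then to close the two uniqueness statements with Picard--Lindel\"of.

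Since $f(s) := \alpha |s|^{p-2} s$ is $C^1$ on $\R$ for $p \ge 2$ (its derivative $(p-1)\alpha |s|^{p-2}$ is continuous, including at $0$), the Cauchy problem is uniquely solvable locally. Multiplying $-u'' = f(u)$ by $u'$ shows $H_u(x) = \tfrac12 (u'(x))^2 + V(u(x))$ is conserved, and the initial data force $H_u = V(u_0) = \tfrac{\alpha}{p} u_0^p$. This already bounds $|u|$ by $u_0$, giving global existence.

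To obtain periodicity I would analyze the phase portrait. Since $u''(0) = -\alpha u_0^{p-1} < 0$, $u$ strictly decreases; by conservation, $u'$ next vanishes exactly when $u = -u_0$, at some time $T > 0$. Then $y(x) := -u(T-x)$ solves the same ODE with $y(0) = u_0$, $y'(0) = 0$, so by uniqueness $y = u$, which forces $u(T + x) = -u(T - x)$ and hence $u(2T) = u_0$, $u'(2T) = 0$; a further use of uniqueness gives periodicity of period $\tau(u_0) := 2T$. The explicit formula then follows by separating variables: from $|u'| = \sqrt{2(V(u_0) - V(u))}$ one has
\begin{equation*}
\tau(u_0) = 2\sqrt{\tfrac{p}{2\alpha}} \int_{-u_0}^{u_0} \frac{\intd u}{\sqrt{u_0^p - |u|^p}},
\end{equation*}
and the substitution $u = u_0 w$ factors out the dependence on $u_0$ and $\alpha$, yielding $\tau(u_0) = C(p)\,\alpha^{-1/2}\,u_0^{(2-p)/2}$ with $C(p) = \sqrt{2p} \int_{-1}^1 (1 - |w|^p)^{-1/2} \intd w$, finite because $1 - |w|^p \sim p(1 - |w|)$ near $w = \pm 1$.

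For the two uniqueness claims, any solution $v$ with ODE energy $V(u_0)$ attains $\|v\|_{L^\infty} = u_0$ at some $x^*$ where $v'(x^*) = 0$; after possibly replacing $v$ by $-v$ (also a solution, by oddness of $f$), Cauchy--Lipschitz at $x^*$ forces $v(\cdot) = u(\cdot - x^*)$, so $v$ is a time translate of $u$. Uniqueness by period reduces to this, because for $p > 2$ the exponent $(2-p)/2$ is strictly negative, making $u_0 \mapsto \tau(u_0)$ strictly decreasing and hence injective, so a common period yields a common amplitude and thus a common energy. The main subtlety I foresee is the case $p = 2$: the period is then independent of $u_0$ and uniqueness by period genuinely fails (all sinusoids of a given frequency share the same period); since the paper works with $p > 6$ this is harmless, but the final clause has to be read under the tacit assumption $p > 2$.
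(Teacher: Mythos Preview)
Your argument is correct and follows essentially the same route as the paper: the period integral, the scaling substitution extracting the $\alpha^{-1/2} u_0^{(2-p)/2}$ dependence, and the monotonicity of $u_0 \mapsto \tau(u_0)$ are exactly what the paper uses (the paper phrases the energy-uniqueness step as ``the level set $\{\tfrac12 v^2 + V(u) = h\}$ is a single closed curve,'' which is equivalent to your Cauchy--Lipschitz argument at an extremum). Your observation that the ``unique solution with this period'' clause fails for $p=2$ is a genuine point the paper glosses over; one small slip is the intermediate identity $u(T+x) = -u(T-x)$, which should read $u(T+x) = -u(x)$ (combine $u(x)=-u(T-x)$ with the time-reversal symmetry $u(-x)=u(x)$), but your conclusion $u(2T)=u_0$, $u'(2T)=0$ is unaffected.
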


\begin{proof}
  It is a standard fact (see e.g. \cite[p.~18]{Ar}) that the period is
  given by
  \begin{equation*}
    \tau(u_0)
    = 2 \int_{-u_0}^{u_0} \frac{\intd u}{\sqrt{2(V(u_0) - V(u))}}
    = \sqrt{\frac{8p}{\alpha}}
    \int_{0}^{u_0} \frac{\intd u}{\sqrt{u_0^p - u^p}}
    = \Biggl( \sqrt{\frac{8p}{\alpha}}
    \int_{0}^{1} \frac{\intd t}{\sqrt{1 - t^p}} \Biggr)
    u_0^{1-p/2}.
  \end{equation*}
  The claim about the energy follows from the definitions.   
  The fact that the set \begin{equation*}
    \bigl\{ (u, v) \in \R^2 \bigm| \tfrac{1}{2}v^2 + V(u) = h \bigr\}
  \end{equation*}
  is empty for $h < 0$, is $\{ (0, 0) \}$ for $h = 0$ and is a simple
  closed curve for $h > 0$ implies that no other solutions have this same energy since by a phase plane analysis we obtain that there is a unique orbit of energy $h$ for every $h>0$. We then deduce from the previous computations
  that this orbit corresponds to a
  solution of period $C(\alpha, p) u_0^{1-p/2}$, which ends
  the proof as the map
  \begin{equation*}
    \intervaloo{0, +\infty} \to \intervaloo{0, +\infty}:
    u_0 \mapsto C(\alpha, p) u_0^{1-p/2}
  \end{equation*}
  is decreasing, so all orbits correspond to solutions
  with different periods.
\end{proof}

From here we may deduce that functions with a high ODE energy necessarily have a high $L^2$ norm.

\begin{corollary}\label{below_estimate}
  Let $\ell > 0$,
  $0 < \underline{\alpha} < \overline{\alpha} < \infty$,
  and $p > 2$.
  For every $\underline{\mu} > 0$, there exists $\underline{H} > 0$
  such that if $u: \intervalcc{0, \ell} \rightarrow \R$ is a solution to
  \begin{equation*}
    -u'' = \alpha |u|^{p-2} u,
    \qquad \text{with }
    \alpha \in \intervalcc{\underline{\alpha}, \overline{\alpha}}
    \text{ and }
    H_u \ge \underline{H},
  \end{equation*}
  then 
  \begin{equation*}
    \int_0^{\ell} |u(x)|^2 \intd x \ge \underline{\mu}.
  \end{equation*}
\end{corollary}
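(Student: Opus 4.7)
The plan is to extend $u$ to a global periodic solution on $\mathbb{R}$, then combine the period formula from Lemma~\ref{lem_determined} with the $L^2$ lower bound from Lemma~\ref{bound_L2_norm}. The key observation is that, for $p > 2$, a large ODE energy $H_u$ forces both a large $L^\infty$-norm and a small period, so many full oscillations fit inside the fixed interval $[0, \ell]$.

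I would first extend $u$ to a solution $\tilde u$ on $\mathbb{R}$, which is possible since the autonomous ODE has a locally Lipschitz right-hand side (recall $p > 2$) and conservation of $H_u$ bounds both $|u|$ and $|u'|$, preventing blow-up. For $H_u \ge \underline{H} > 0$, the phase-plane analysis underlying Lemma~\ref{lem_determined} shows that $\tilde u$ is $\tau$-periodic, with
\begin{equation*}
u_0 := \|\tilde u\|_{L^\infty} = \Bigl(\frac{p H_u}{\alpha}\Bigr)^{1/p}
\qquad \text{and} \qquad
\tau = \frac{C(p)}{\sqrt{\alpha}}\, u_0^{(2-p)/2}.
\end{equation*}
Since $\alpha \in [\underline{\alpha}, \overline{\alpha}]$ and $p > 2$, one has $u_0 \to +\infty$ and $\tau \to 0$ as $\underline{H} \to +\infty$, uniformly in $\alpha$.

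Choosing $\underline{H}$ large enough so that $\tau \le \ell/2$ for every admissible $\alpha$, I would set $n := \lfloor \ell/\tau \rfloor \ge 1$ and use periodicity of $\tilde u$ together with Lemma~\ref{bound_L2_norm} to estimate
\begin{equation*}
\int_0^\ell |u(x)|^2 \intd x
= \int_0^\ell |\tilde u(x)|^2 \intd x
\ge n \int_0^{\tau} |\tilde u(x)|^2 \intd x
\ge \frac{n \tau}{8}\, u_0^2
\ge \frac{\ell}{16}\, u_0^2,
\end{equation*}
where the last step uses $n \tau \ge \ell - \tau \ge \ell/2$. Since $u_0^2 \ge (p \underline{H}/\overline{\alpha})^{2/p} \to +\infty$ as $\underline{H} \to +\infty$, further enlarging $\underline{H}$ if needed makes the right-hand side exceed $\underline{\mu}$.

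No serious technical difficulty is expected; the only subtlety is to track uniformity in $\alpha$, which follows from the explicit dependence in Lemma~\ref{lem_determined} and compactness of $[\underline{\alpha}, \overline{\alpha}]$. The hypothesis $p > 2$ is essential here: it makes the exponent $(2-p)/2$ negative, so that $\tau(u_0) \to 0$ as $u_0 \to +\infty$, which is precisely what guarantees that enough periods fit inside the fixed interval $[0, \ell]$.
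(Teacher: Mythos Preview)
Your proposal is correct and follows essentially the same route as the paper's proof: both use Lemma~\ref{lem_determined} to show the period becomes smaller than $\ell/2$ when $\underline{H}$ is large, then invoke Lemma~\ref{bound_L2_norm} on an interval containing at least one full period (the paper takes $[0,k\tau]$ with $k\tau \ge \ell/2$, you take $n = \lfloor \ell/\tau \rfloor$ periods). Your version is in fact slightly more careful in making the extension to a global periodic solution explicit and in giving the quantitative lower bound $\frac{\ell}{16}\,u_0^2$.
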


\begin{proof}
  Lemma \ref{lem_determined} implies that if the ODE
    energy $H_u \ge \underline{H}$, then
    $u_0 \ge (p \underline{H} / \overline{\alpha} )^{1/p}$ and so
    \begin{equation*}
      \tau(u_0)
      \le \frac{C(p)}{\sqrt{\underline{\alpha}}}
      \biggl(\frac{p\, \underline{H}}{\overline{\alpha}}
      \biggr)^{(2-p)/(2p)} .
    \end{equation*}
  Thus, if $\underline{H}$ is large
  enough, $u$ is periodic with a period $\tau$ less then
  $\ell /2$. There thus exists some interval
  $\intervalcc{0,k\tau} \subseteq \intervalcc{0, \ell}$,
  with $k \in \N$ and $k\tau \ge \ell/2$.
  Thus $u$ is $k\tau$-periodic
  and its ODE energy is at least $\underline{H}$.
  Lemma~\ref{bound_L2_norm} implies that the $L^2$-norm of $u$
  on $\intervalcc{0,k\tau}$ can be made arbitrarily high
 taking $\underline{H}$
  large enough.
  This ends the proof.
\end{proof}
	
The last result of this section will be crucially used to rule out the
possibility that the Lagrange multiplier associated to a weak limit of
some Palais-Smale sequence is $0$.

\begin{proposition}\label{forbid}
  Let $\G$ be a metric graph with finitely many edges (bounded or not)
  and $p > 2$. Let $\{u_n\} \subset H^1(\G)$
  and $\{\rho_n\} \subset \intervalcc{1/2, 1}$
    be sequences such that $u_n$ is a solution
  to \eqref{1.2LL}
  with $\rho = \rho_n$ and
  $\lambda =0$.
  If
  $E_{\rho_n}(u_n, \G) \to + \infty$, then
  $\|u_n\|_{L^2(\G)} \to \infty$.
\end{proposition}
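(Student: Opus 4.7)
The plan is to argue by contradiction, reducing everything to a pigeonhole argument on the bounded edges combined with Corollary~\ref{below_estimate}. Suppose $\|u_n\|_{L^2(\G)}$ does not go to $+\infty$; then, passing to a subsequence, there exists $M > 0$ such that $\|u_n\|_{L^2(\G)} \le M$ for all $n$. The goal is to contradict this bound by producing a single bounded edge on which $u_n$ accumulates arbitrarily much mass.

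The first step is to translate the hypothesis $E_{\rho_n}(u_n,\G) \to +\infty$ into a statement about ODE energies. Since $u_n$ solves~\eqref{1.2LL} with $\lambda = 0$ and $\rho = \rho_n$, Lemma~\ref{lemma:link} (specialized to $\lambda = 0$) gives
\begin{equation*}
  E_{\rho_n}(u_n,\G) = \frac{p-2}{p+2}\, P_{\rho_n}(u_n,\G),
\end{equation*}
so $P_{\rho_n}(u_n,\G) \to +\infty$. Recalling the definition~\eqref{eq:defP}, this is a finite sum over the bounded edges of $\ell_e H_{u_n}(e)$. The crucial observation here, particular to $\lambda = 0$, is that the ODE energy
\begin{equation*}
  H_{u_n}(e) = \tfrac{1}{2}|u_n'(x)|^2 + \tfrac{\rho_n}{p}|u_n(x)|^p
\end{equation*}
is \emph{non-negative} on every bounded edge (where $\kappa \equiv 1$). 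Because there are only finitely many bounded edges, a pigeonhole argument yields a bounded edge $e^*$ and a further subsequence along which $H_{u_n}(e^*) \to +\infty$.

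The second step is to use Corollary~\ref{below_estimate} on $e^*$. On this edge $u_n$ satisfies $-u_n'' = \rho_n |u_n|^{p-2} u_n$ with $\rho_n \in \intervalcc{1/2,1}$, so the hypotheses of Corollary~\ref{below_estimate} hold uniformly with $\underline{\alpha} = 1/2$, $\overline{\alpha} = 1$ and $\ell = \ell_{e^*}$. Applying the corollary with the target mass $\underline{\mu} := M^2 + 1$ produces a threshold $\underline{H}$; once $n$ is large enough that $H_{u_n}(e^*) \ge \underline{H}$, we get
\begin{equation*}
  \|u_n\|_{L^2(\G)}^2 \ge \int_{e^*} |u_n|^2 \intd x \ge M^2 + 1,
\end{equation*}
contradicting $\|u_n\|_{L^2(\G)} \le M$ and concluding the proof.

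The main conceptual point, and the only step that is not entirely routine, is recognizing that \emph{the sign hypothesis $\lambda = 0$ is exactly what makes $H_{u_n}(e) \ge 0$}, so that the pigeonhole argument on the finite sum $P_{\rho_n}(u_n,\G) \to +\infty$ actually forces one individual term to blow up. For $\lambda > 0$ the potential $V_\lambda(u) = \frac{\rho}{p}|u|^p - \frac{\lambda}{2}|u|^2$ can take large negative values and cancellations between edges would be possible, which explains why this sharper conclusion is available precisely in the $\lambda = 0$ regime that we must rule out. The uniformity in $\rho_n$ over $\intervalcc{1/2,1}$ is built into Corollary~\ref{below_estimate} and causes no extra difficulty.
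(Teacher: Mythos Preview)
Your proof is correct and follows essentially the same route as the paper: use Lemma~\ref{lemma:link} with $\lambda=0$ to get $P_{\rho_n}(u_n,\G)\to+\infty$, pigeonhole on the finitely many bounded edges to find one with $H_{u_n}(e^*)\to+\infty$ along a subsequence, and then apply Corollary~\ref{below_estimate}. The only cosmetic difference is that the paper argues directly (any subsequence has a further subsequence with $\|u_n\|_{L^2}\to\infty$) rather than by contradiction, and justifies the pigeonhole via $\max_e \ell_e H_{u_n}(e)\ge P_{\rho_n}(u_n,\G)/\card(\mathcal{E})$, which does not actually require the non-negativity of $H_{u_n}(e)$ that you single out; your observation is correct but not strictly needed.
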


\begin{proof}
  First notice that it is sufficient to prove that, up to a
    subsequence, $\|u_n\|_{L^2(\G)} \to \infty$ because, replaying the
    argument on an arbitrary subsequence of $\{u_n\}$ will give a
    sub-subsequence which converges to infinity, which is equivalent
    to the claim.

  Since $E_{\rho_n}(u_n, \G) \to + \infty$ and $\lambda = 0$,
  Lemma~\ref{lemma:link} implies that $P_{\rho_n}(u_n, \G) \to +\infty$.
  Let $e_0 \in \mathcal{E}$ be a bounded edge such that
  $\ell_{e_0} H_{u_n}(e_0) \ge \ell_{e} H_{u_n}(e)$ for all bounded
  edges $e \in \mathcal{E}$.  Given that $\mathcal{E}$ is finite, it
  is possible to select $e_0$ independent of $n$, taking subsequences
  of $\{u_n\}$ and $\{\rho_n\}$ if necessary.  Since
  \begin{equation*}
    P_{\rho_n}(u_n, \G)
    \le \card(\mathcal{E}) \, \ell_{e_0} H_{u_n}(e_0),
  \end{equation*}
  $ H_{u_n}(e_0) \to + \infty$.  At this point, using
  Corollary~\ref{below_estimate} with $u = u_n$, $\alpha =\rho_n$, and
  $[0, \ell] = e_0$, we deduce that
  $\|u_n\|_{L^2(e_0)} \to \infty$ and thus
  $\|u_n\|_{L^2(\G)} \to \infty$.
\end{proof}

\section{Infinitely many minimax levels for $E_{\rho}$
  for almost every $\rho \in [\frac{1}{2}, 1]$}
\label{Proof_Th2}

This aim of this section is to prove the following result.
\begin{proposition}\label{Prop: inftylevels}
  For any $\mu > 0$ and $p > 2$, there exists $N_0 \in \N$ so that if $N \ge N_0$,
  there exist functions $\gamma_{0,N}$ and $\gamma_{1,N}$ such
  that the family of functionals
  \begin{equation*}
    E_\rho(\cdot,\G) : H^1(\G) \to \R :
    u \mapsto
    \frac{1}{2}\int_\G|u'|^2-\frac{{\rho}}{p}\int_\K|u|^p,
    \qquad \rho \in \left[\frac{1}{2},1\right]
  \end{equation*}
  satisfies the assumptions of Theorem \ref{Objective1}. In
  particular,
  \begin{equation}\label{SMP1LL}
    \Gamma_N
    = \bigl\{ \gamma\in C([0,1]\times \mathbb{S}^{N-2}, H_{\mu}^1(\G))
    \bigm| \forall t \in [0,1],\ \gamma(t,\cdot) \text{ is odd, }
    \gamma (0, \cdot) = \gamma_{0,N}, \text{ and }
    \gamma(1, \cdot) = \gamma_{1,N} \bigr\}
  \end{equation}
  is non void and 
  \begin{equation}\label{SMP2LL}
    c_{\rho}^N
    = \inf_{\gamma\in\Gamma_N}\max_{(t,s) \in [0,1] \times \mathbb{S}^{N-2}}
    E_\rho(\gamma(t,s), \G))
    > \max_{s \in \mathbb{S}^{N-2}}
    \max\bigl\{E_\rho(\gamma_0(s), \G)), E_\rho (\gamma_1(s), \G)\bigr\},
    \quad \rho \in \left[\frac{1}{2},1\right]
  \end{equation}
  Furthermore, $c_{\rho}^N \xrightarrow[N\to+\infty]{} +\infty \, $
  uniformly w.r.t.\ $\rho \in \intervalcc{1/2, 1}$.
    In particular there are infinitely many distinct values of
    $c_{\rho}^N$.
\end{proposition}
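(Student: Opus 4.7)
The plan is to verify the hypotheses of Theorem~\ref{Objective1} applied to $E_\rho(\cdot,\G)$ on $S_\mu = H_\mu^1(\G)$. Non-emptiness of $\Gamma_N$ will follow from Lemma~\ref{Pierotti-Verzini}, and the strict inequality \eqref{SMP2LL} will be obtained by checking the sufficient conditions (H1)--(H2) of Theorem~\ref{prop:MPcondition}; the presence of a half-line in $\G$ will be used in an essential way in the construction of the two endpoints $\gamma_{0,N}$ and $\gamma_{1,N}$.

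First, I would parametrize a half-line of $\G$ as $\bigintervalco{0,+\infty}$ with $x=0$ at a vertex of $\K$, choose $L_0 > 0$ larger than the extent of $\K$ along that half-line, fix a profile $\phi\in C_c^\infty(\bigintervaloo{0,1})$ with $\|\phi\|_{L^2}=1$, and for $R>0$ large set $\phi_{R,k}(x) = R^{-1/2}\phi(R^{-1}(x - L_0 - kR))$, extended by zero on $\G$. Each $\phi_{R,k}$ is supported on the half-line away from $\K$, has unit $L^2$-norm, and satisfies $\|\phi_{R,k}'\|_{L^2}^2 = R^{-2}\|\phi'\|_{L^2}^2$. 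Letting $u_k := \sqrt{\mu}\,\phi_{R,k}$ and $v_k := \sqrt{\mu}\,\phi_{R,N+k}$ for $k = 1,\dots,N-1$ yields two orthogonal families in $S_\mu$; Lemma~\ref{Pierotti-Verzini} then produces odd endpoints $\gamma_{0,N},\gamma_{1,N}\colon \mathbb{S}^{N-2}\to S_\mu$ with $\Gamma_N\ne\emptyset$. Since the supports avoid $\K$, a direct computation gives $E_\rho(\gamma_{i,N}(a)) = \mu\|\phi'\|_{L^2}^2/(2R^2)$, uniformly in $a$, $\rho\in\bigintervalcc{1/2,1}$, and $i\in\{0,1\}$, which tends to $0$ as $R\to+\infty$.

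Next, to obtain the strict inequality via Theorem~\ref{prop:MPcondition}, I would pick a smooth cutoff $\chi\colon\G\to\intervalcc{0,1}$ that vanishes on $\bigintervalcc{0,L_0+NR}$ and equals $1$ on $\bigintervalco{L_0+(N+1)R,+\infty}$, and define the even functional $J(u) := \int_\G \chi u^2$. Then $J(\gamma_{0,N})=0$ and $J(\gamma_{1,N})=\mu$, so (H1) holds with $\beta=\mu/2$. For (H2), one must exhibit a subspace $W\subset H^1(\G)$ of dimension at most $N-2$ and a constant $C_N > \mu\|\phi'\|_{L^2}^2/(2R^2)$ such that $\inf_B E_\rho \ge C_N$ on $B := S_\mu\cap W^\perp\cap\{J=\beta\}$. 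Following the strategy of \cite{BaVa,BaLiLi,PiVe}, a natural candidate $W$ is spanned by a maximal family of low-frequency directions on a long subinterval of the half-line where $\chi$ is supported (e.g.\ the first $N-2$ Dirichlet eigenfunctions of $-\partial_{xx}$ on that subinterval): the constraints $u\perp W$ and $J(u)=\mu/2$ should then force the portion of $u$ lying in that region to carry substantial mass while residing in high-frequency modes, yielding a Poincar\'e-type lower bound on $\|u'\|_{L^2(\G)}^2$ that grows with $N$; the supercritical $L^p(\K)$-term must then be controlled uniformly in $\rho$ by Gagliardo-Nirenberg on the compact core, together with the bound on $\|u\|_{L^2(\K)}$ implicit in the $J$-constraint.

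The main obstacle is precisely making this lower-bound argument quantitative and uniform: one must combine the kinetic growth coming from the orthogonality to $W$, the mass-splitting imposed by $J$, and the supercritical control on $\K$ into a single estimate $C_N\to+\infty$ uniformly in $\rho\in\bigintervalcc{1/2,1}$, while ruling out low-energy ``shortcut'' paths in $\Gamma_N$ of the rotational type produced by Lemma~\ref{Pierotti-Verzini}. The design of $\chi$, $W$, and the interplay between $N$, $R$, and $L_0$ all have to be tuned together. Once such a $C_N$ is secured, one chooses $N_0$ so that $C_{N_0}>0$ and, for each $N\ge N_0$, picks $R=R(N)$ large enough that $\mu\|\phi'\|_{L^2}^2/(2R^2) < C_N$; Theorem~\ref{prop:MPcondition} then delivers \eqref{SMP2LL}, and $c_\rho^N \ge C_N\to+\infty$ uniformly in $\rho$, producing infinitely many distinct minimax levels.
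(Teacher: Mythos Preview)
Your construction of the endpoints $\gamma_{0,N}$ and $\gamma_{1,N}$ and the verification that $\Gamma_N\ne\emptyset$ via Lemma~\ref{Pierotti-Verzini} are fine. The genuine gap is in the linking step: with your choice of $J(u)=\int_\G\chi\,u^2$ and of $W$ (Dirichlet eigenfunctions on a bounded subinterval of the half-line), the set $B=S_\mu\cap W^\perp\cap\{J=\mu/2\}$ contains functions of arbitrarily small energy, so $\inf_B E_\rho$ cannot be bounded below by a constant $C_N$ that tends to $+\infty$ --- in fact it is $0$. To see this, take two disjoint, widely spread bumps on the half-line, one supported in $\{\chi=0\}$ and the other in $\{\chi=1\}$, each carrying mass $\mu/2$, both avoiding the bounded interval on which $W$ is supported (so $u\perp W$ is automatic) and both avoiding $\K$ (so the $L^p$ term vanishes). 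Then $J(u)=\mu/2$, $u\in S_\mu$, and $E_\rho(u)=\tfrac12\|u'\|_{L^2}^2$ can be made as small as one wishes by dilation. The hoped-for Poincar\'e-type bound fails because neither the $L^2$-type constraint $J(u)=\mu/2$ nor orthogonality to finitely many compactly supported functions prevents mass from escaping to infinity along the half-line with vanishing kinetic energy.

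The paper's route avoids this by a different choice of both $J$ and the endpoints. It takes $J(u)=\|u'\|_{L^2(\G)}$, so that the level set $\{J=\beta_N\}$ \emph{directly} fixes the kinetic energy; it takes $W=V_{N-2}$ from an exhausting sequence of $H^1(\G)$, and the key Lemma~\ref{lemma: mu_n divergence} shows that on $V_{N-2}^\perp$ the Sobolev constant $S_N\to\infty$, so the negative $L^p(\K)$-term is dominated by the (fixed) kinetic term and $b_\rho^N=\inf_{B_N}E_\rho\to+\infty$. To satisfy (H1) with this $J$, one needs an endpoint $\gamma_{1,N}$ with \emph{large} $\|u'\|$ but bounded energy; this is obtained by placing the $\overline{\varphi}_i$ inside $\K$ and concentrating them (Lemma~\ref{lemma:eigendilationfunctions}), which is exactly where $p>6$ enters. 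Your endpoints, both lying on the half-line with identical small $\|u'\|$, are incompatible with this $J$, which is what forced you to the cutoff-$J$ that then lacks coercivity.
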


\begin{remark}\label{c_rho_N_finite}
    Note that the levels $c_{\rho}^N$ are real numbers for every $N \ge N_0$
    and every $\rho \in \intervalcc{\frac{1}{2}, 1}$
    since they are defined by infima over nonempty sets (thus $c_{\rho}^N < +\infty$) and that
    inequality \eqref{SMP2LL} implies that $c_{\rho}^N > -\infty$.
\end{remark}

We consider Theorem \ref{Objective1} with the choice of the family $\Phi_{\rho}= E_\rho(\cdot, \G)$. Also $E=H^1(\G)$, $H=L^2(\G)$, $S_\mu = H^1_\mu(\G)$, 
Setting
\begin{equation*}
  A(u) = \frac12\int_{\G} |u'|^2\intd x
  \quad \text{and} \quad
  B(u) = \frac{\rho}{p}\int_{\K} |u|^p\intd x,
\end{equation*}
assumption \eqref{hp coer} holds, since we have that
\[
u \in H^1_\mu(\G), \ \|u\|_{H^1(\G)} \to +\infty \quad \implies \quad A(u) \to +\infty.
\]
Let $E'_{\rho}$ and $E''_{\rho}$ denote respectively the free first
and second Fr\'echet derivatives of $E_{\rho}$.
Note that $B''$, whence $E''_\rho$, is
$\min\{p-2, 1\}$-H\"older continuous
on bounded sets of~$H^1(\G)$, which,
in view of Remark~\ref{rem:Holder-cont},
implies that assumption \eqref{Holder} holds.
As such, it only remains to show that the two hypothesis posed on
$\Gamma_N$ hold. This is where Lemma \ref{Pierotti-Verzini} and
Theorem \ref{prop:MPcondition}  will come into play.

\medskip

The following two lemmas will provide orthogonal families
to be used in Lemma~\ref{Pierotti-Verzini}.

\begin{lemma}\label{lemma:eigennotfunctions}
  Let $\G$ be a graph satisfying \eqref{HG}, $p > 2$ and $\mu >0$.
  For any $\beta > 0$, there exists a sequence of functions
  $\{\varphi_1,\varphi_2,\dots\}$ such that for any $i, j\in \N^*$
  and any $\rho \in [ \frac{1}{2}, 1]$:
  \begin{itemize}
  \item[(i)] $\varphi_i\in S_\mu$; \quad
    $\|\varphi_i'\|_{L^2(\G)}=\beta$; \quad
    $E_\rho(\varphi_i, \G)=\beta^2/2$;
  \item[(ii)] $\varphi_i$ has compact support and
    $\supp(\varphi_i)\cap \supp(\varphi_j)=\emptyset$ for $i\neq j$;
  \item[(iii)] for any $N \ge 2$ and $a\in \mathbb{S}^{N-2}$,
    \begin{math}
      \bigl\| \bigl(\sum_{i=1}^{N-1}a_i \varphi_i \bigr)'\bigr\|_{L^2(\G)}
      = \beta
    \end{math}
    and
    \begin{math}
      E_\rho\bigl(\sum_{i=1}^{N-1}a_i\varphi_i, \, \G\bigr)
      = \beta^2/2  
    \end{math}%
    .
  \end{itemize}
\end{lemma}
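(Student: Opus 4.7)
The key observation is that requiring $E_\rho(\varphi_i,\G)=\tfrac12\|\varphi_i'\|_{L^2(\G)}^2$ in (i) forces $\int_\K |\varphi_i|^p\intd x = 0$, i.e., forces $\varphi_i$ to vanish on the compact core $\K$. By assumption~\eqref{HG}, $\G$ possesses at least one half-line $h$, which I identify with $\intervalco{0,+\infty}$ (the endpoint $0$ being a vertex of $\K$). The plan is to place each $\varphi_i$ as a translated compactly supported bump strictly inside the interior of $h$. This automatically gives (ii), makes each $\varphi_i$ vanish on $\K$ (so the $L^p$-term in $E_\rho$ drops out and all quantities become independent of $\rho$), and the pairwise disjoint support structure reduces (iii) to orthogonality in both the $L^2$ norm and the $L^2$-norm of the derivative.

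First I build a base bump. Fix once and for all a nonzero $\psi_0 \in C_c^\infty(\intervaloo{0,+\infty})$ and, for parameters $A, L > 0$, set $\psi_{A,L}(x) := A\, \psi_0(x/L)$. A change of variables gives
\begin{equation*}
  \|\psi_{A,L}\|_{L^2}^2 = A^2 L \|\psi_0\|_{L^2}^2,
  \qquad
  \|\psi_{A,L}'\|_{L^2}^2 = A^2 L^{-1} \|\psi_0'\|_{L^2}^2 .
\end{equation*}
Imposing $\|\psi_{A,L}\|_{L^2}^2 = \mu$ and $\|\psi_{A,L}'\|_{L^2}^2 = \beta^2$ yields a unique solution $(A, L) \in \intervaloo{0,+\infty}^2$. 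Call the resulting function $\psi$; its support is some interval $\intervalcc{c, c+d} \subset \intervaloo{0,+\infty}$. Define $\varphi_i$ on $\G$ by setting $\varphi_i(x) := \psi(x - 2(i-1)d)$ for $x \in h$ and $\varphi_i \equiv 0$ elsewhere. Each $\varphi_i$ is smooth, compactly supported in the interior of $h$, and hence belongs to $H^1(\G)$. The choice of the shift $2(i-1)d$ ensures (ii), and (i) then follows from translation invariance of the $L^2$ and $L^2$-derivative norms together with the fact that $\varphi_i \equiv 0$ on $\K$.

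For (iii), fix $a = (a_1,\dots,a_{N-1}) \in \mathbb{S}^{N-2}$ and let $v := \sum_{i=1}^{N-1} a_i \varphi_i$. Pairwise disjointness of the supports yields
\begin{equation*}
  \|v\|_{L^2(\G)}^2 = \sum_{i=1}^{N-1} a_i^2 \|\varphi_i\|_{L^2(\G)}^2 = \mu,
  \qquad
  \|v'\|_{L^2(\G)}^2 = \sum_{i=1}^{N-1} a_i^2 \|\varphi_i'\|_{L^2(\G)}^2 = \beta^2 ,
\end{equation*}
and, since $v \equiv 0$ on $\K$, one has $E_\rho(v,\G) = \tfrac12\|v'\|_{L^2(\G)}^2 = \beta^2/2$ uniformly in $\rho \in \intervalcc{1/2,1}$. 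There is no genuine obstacle; the only essential ingredient is the presence of a half-line in \eqref{HG}, which is what allows infinitely many translates of $\psi$ to be placed disjointly inside $\G \setminus \K$.
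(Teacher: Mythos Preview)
Your proof is correct and follows essentially the same approach as the paper: place compactly supported bumps with the prescribed $L^2$ and $H^1$-seminorm on a half-line of $\G$ (so the nonlinear term vanishes), then translate to obtain pairwise disjoint supports. The only cosmetic difference is that the paper uses the $L^2$-preserving one-parameter dilation $\varphi^t(x)=t^{1/2}\varphi(tx)$ and tunes $t$ to hit $\|\varphi_1'\|_{L^2}=\beta$, whereas you solve for two parameters $(A,L)$ simultaneously; the resulting constructions are equivalent.
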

\begin{proof}
  Let $\varphi\in C_{\text{c}}^{\infty}(\R)$ be a function supported
  on the interval $\intervaloo{0,1}$
  such that $\|\varphi\|_{L^2(\R)}^2=\mu$.  Define,
  for $t\in \R^+$ the function $\varphi^t$ by
  \begin{equation}
    \label{Eq:dilation}
    \varphi^{t}(x) := t^{1/2}\varphi(tx).
  \end{equation}
  If we now view $\varphi$ as a function in $H^1(\G)$ whose support
  is contained in a half-line which we identify with
  $\intervalco{0, \infty}$, we can define
  \begin{equation*}
    \varphi_1 := \varphi^\tau
    \quad \text{with} \quad
    \tau := \frac{\beta}{\|\varphi'\|_{L^2(\G)}}.
  \end{equation*}
  The function $\varphi_1$ satisfies (i). Indeed,
  for any $t>0$, $\|\varphi^t\|_{L^2(\G)} = \|\varphi\|_{L^2(\G)}=\mu$, and a
  direct calculation yields
  \begin{equation}
    \label{Eq:dilationderivative}
    \|\varphi_1'\|_{L^2(\G)}^2
    = \tau^2 \|\varphi'\|_{L^2(\G)}^2
    = \beta^2.
  \end{equation}
  Finally, since $\varphi_1$ is supported in the half-line, we have
  \begin{equation*}
    E_\rho(\varphi_1,\G)
    = \tfrac{1}{2} \|\varphi_1'\|_{L^2(\G)}^2
    = \frac{\beta^2}{2}.    
  \end{equation*}
  Define now, for $i \geq 2$,
  \begin{equation*}
    \varphi_i(x)
    := \varphi_1\left(x- \frac{i-1}{\tau} \right).
  \end{equation*}
  Since the $\varphi_i$ are translations of $\varphi_1$ they still
  satisfy (i).  Also, observe that by definition
  $\supp(\varphi_i) \subset \bigintervaloo{\frac{i-1}{\tau},
    \frac{i}{\tau}}$ and so they all have disjoint compact
  supports.  This
  is (ii). Finally, observe that for $a\in \mathbb{S}^{N-2}$
  \begin{equation*}
    \left\|\biggl(\sum_{i=1}^{N-1}a_i\varphi_i\biggr)^{'} \right\|_{L^2(\G)}^2
    = \sum_{i=1}^{N-1}a_i^2 \, \|\varphi_i'\|_{L^2(\G)}^2
    = \beta^2,
  \end{equation*}
  from which (iii) follows, ending the proof. 
\end{proof}

\begin{lemma}\label{lemma:eigendilationfunctions}
  Let $\G$ be a graph satisfying \eqref{HG}, $p>6$ and $\mu >0$. For
  any fixed integer $N \ge 2$  and any given values of
  $\overline{\beta} > 0$, $\overline{b} > 0$, there exist functions
  $\overline{\varphi}_1,\dots,\overline{\varphi}_N$, compactly
  supported in $\K$, such that for all $i,j\in \{1,\dots, N\}$ and all
  $\rho \in [1/2, 1]$,
  \begin{itemize}
  \item[(i)] $\overline{\varphi}_i \in S_\mu$; \quad
    $\|\overline{\varphi}_i'\|_{L^2(\G)}\geq\overline{\beta}$; 
  \item[(ii)] $\supp(\overline{\varphi}_i)
    \cap \supp(\overline{\varphi}_j) = \emptyset$ for $i\neq j$;
  \item[(iii)] if $a\in \mathbb{S}^{N-2}$ then
    \begin{math}
      \bigl\| \bigl(\sum_{i=1}^{N-1}a_i \overline{\varphi}_i
      \bigr)' \bigr\|_{L^2(\G)}
      \ge \overline{\beta}
    \end{math}
    and
    \begin{math}
      E_\rho\bigl(\sum_{i=1}^{N-1}a_i\overline{\varphi}_i, \, \G\big)
      \le \overline{b}
    \end{math}%
    .
  \end{itemize}
\end{lemma}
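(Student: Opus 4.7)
The plan is to construct the functions $\overline{\varphi}_i$ as mass-preserving dilations and translations of a single bump function, all concentrated inside one bounded edge of $\K$, and to exploit the $L^2$-supercritical scaling (guaranteed by $p > 6$) to make the kinetic term arbitrarily large while the localised $L^p$ term dominates in $E_\rho$.

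Concretely, I would first fix a reference bump $\varphi \in C_c^\infty(\R)$ with $\supp\varphi \subset (0,1)$ and $\|\varphi\|_{L^2(\R)}^2 = \mu$. Since $\K$ is non-trivial, pick a bounded edge $e$ of length $\ell_e > 0$ and identify it with $[0,\ell_e]$. For $t > 0$, set $\varphi^t(x) := t^{1/2}\varphi(tx)$ (the mass-preserving dilation from~\eqref{Eq:dilation}), whose support is $(0, 1/t)$. For $t > N/\ell_e$, the $N$ translates $\overline{\varphi}_i(x) := \varphi^t\bigl(x - (i-1)/t\bigr)$, extended by $0$ to the rest of $\G$, have pairwise disjoint compact supports inside $e \subset \K$ and belong to $H^1(\G) \cap S_\mu$ (the continuity at the vertices of $e$ is automatic since each $\overline{\varphi}_i$ vanishes in a neighborhood of these vertices). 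Properties (ii) and the first half of (i) are then immediate.

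Next, using the scaling identities $\|\overline{\varphi}_i'\|_{L^2(\G)}^2 = t^2 \|\varphi'\|_{L^2(\R)}^2$ and $\|\overline{\varphi}_i\|_{L^p(\G)}^p = t^{p/2-1}\|\varphi\|_{L^p(\R)}^p$, together with the disjointness of the supports, I obtain for any $a \in \mathbb{S}^{N-2}$:
\begin{equation*}
  \Bigl\|\Bigl(\sum_{i=1}^{N-1} a_i \overline{\varphi}_i\Bigr)'\Bigr\|_{L^2(\G)}^2
  = t^2 \|\varphi'\|_{L^2(\R)}^2
  \quad\text{and}\quad
  \int_\K \Bigl|\sum_{i=1}^{N-1} a_i \overline{\varphi}_i\Bigr|^p \intd x
  = \Bigl(\sum_{i=1}^{N-1}|a_i|^p\Bigr)\, t^{p/2-1}\, \|\varphi\|_{L^p(\R)}^p.
\end{equation*}
An elementary power-mean argument gives $\sum_{i=1}^{N-1}|a_i|^p \ge (N-1)^{1-p/2}$ on $\mathbb{S}^{N-2}$ (because $p > 2$ and $\sum a_i^2 = 1$). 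Combined with $\rho \ge 1/2$, this yields
\begin{equation*}
  E_\rho\Bigl(\sum_{i=1}^{N-1} a_i\overline{\varphi}_i,\ \G\Bigr)
  \le \frac{t^2}{2}\|\varphi'\|_{L^2(\R)}^2 - \frac{(N-1)^{1-p/2}}{2p}\, t^{p/2-1}\, \|\varphi\|_{L^p(\R)}^p.
\end{equation*}

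To conclude, since $p>6$ implies $p/2-1 > 2$, the right-hand side tends to $-\infty$ as $t \to +\infty$. Hence I can choose a single $t$ large enough (depending on $N$, $\overline\beta$, $\overline b$, $\ell_e$ and $\varphi$) so that simultaneously $N/t < \ell_e$, $t^2 \|\varphi'\|_{L^2(\R)}^2 \ge \overline\beta^2$, and the displayed upper bound is at most $\overline b$. This gives the second half of (i) and property (iii), uniformly in $\rho \in [1/2,1]$ and $a \in \mathbb{S}^{N-2}$. There is no serious obstacle here; the conceptual point, and the sole reason for the hypothesis $p > 6$, is that the $L^p$ term beats the kinetic term under the $L^2$-preserving dilation, a phenomenon that would fail in the $L^2$-subcritical or critical regimes.
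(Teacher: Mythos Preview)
Your proof is correct and follows essentially the same strategy as the paper's: place $N$ disjoint mass-preserving dilated bumps on a single bounded edge of $\K$ and use the $L^2$-supercritical scaling ($p/2-1>2$) to drive the energy to $-\infty$ uniformly in $a$ and $\rho$. The only cosmetic differences are that the paper spaces the translates at fixed distance $\ell_e/N$ (starting from a bump supported in $(0,\ell_e/N)$ and dilating by $t\ge 1$) and bounds $\sum_{i=1}^{N-1}|a_i|^p$ from below by compactness of $\mathbb{S}^{N-2}$ rather than by your explicit power-mean estimate.
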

\begin{proof}
  Let $\edge=[0,\ell_\edge]$ be any bounded edge of $\G$. Let
  $\varphi\in C^{\infty}_c
  \bigl(\intervaloo{0, \ell_\edge/N} \bigr)$
  be any function such that
  $\|\varphi\|_{L^2(\R)}=\mu$.  Using the notation 
  \eqref{Eq:dilation}, we notice that
  \begin{math}
    \supp(\varphi^{t})\subset \intervaloo{0, \ell_\edge/N}
  \end{math}
  whenever $t\geq 1$.
  Define the functions
  \begin{equation*}
\overline{\varphi}_i
    := \varphi^{t} \left(x-\frac{(i-1)\ell_\edge}{N}\right),
    \qquad
    i = 1,\dots, N,
  \end{equation*}
  where $t \ge 1$ will be chosen later.
  Note that
  \begin{equation*}
    \supp(\overline{\varphi}_i)
    \subset
    \left(\frac{(i-1)\ell_\edge}{N}, \frac{i\ell_e}{N}\right)
  \end{equation*}
  so the functions $\overline{\varphi}_i$ have disjoint supports and
  (ii) is satisfied.
  Viewing now
   $\overline{\varphi}_{i}$ as functions in $H^1(\G)$
  supported in $\edge$, we may compute the
  energy of the function $\sum_{i=1}^{N-1}a_i \overline{\varphi}_{i}$
  with $a \in \mathbb{S}^{N-2}$
  as follows
  \begin{align*}
    E_\rho\biggl(\sum_{i=1}^{N-1}a_i \overline{\varphi}_{i}, \, \G\biggr)
    &= \frac{1}{2} \int_\edge {\textstyle \Bigl| \sum\limits_{i=1}^N
      a_i \overline{\varphi}_{i}'\Big|^2} \intd x
      - \frac{\rho}{p} \int_\edge \Bigr| {\textstyle \sum\limits_{i=1}^{N-1}
      a_i \overline{\varphi}_{i} \Bigr|^p \intd x }\\
    &= \frac{t^2}{2}\sum_{i=1}^{N-1} a_i^2
      \int_0^{\ell_\edge/N} |\varphi'|^2 \intd x
      -\frac{\rho \, t^{(p-2)/2}}{p} \,\sum_{i=1}^{N-1}|a_i|^p
      \int_0^{\ell_\edge/N} |\varphi|^p \nonumber \\
    & \le 
      \frac{t^2\|\varphi'\|_{L^2(\G)}^2}{2}
      - \frac{Ct^{(p-2)/2} \, \|\varphi\|_{L^p(\K)}^p}{2p}
      \xrightarrow[t\to+\infty]{} -\infty
  \end{align*}
  where
  $C := \min_{a\in\mathbb{S}^{N-2}} \sum_{i=1}^{N-1}|a_i|^p$.  Thus,
  for all $\overline{b}\in\R$, there exists $T_0>0$ such that for all
  $t>T_0$ we have
  $E_\rho\big(\sum_{i=1}^Na_i \overline{\varphi}_{i}, \G\big)<\overline{b}$.  As
  a result,
  if we choose
  \begin{equation*}
    t
    :=\max\left\{1,
      \frac{\overline{\beta}}{\|\varphi'\|_{L^2(\G)}},
      T_0\right\},
  \end{equation*}
  the functions $\overline{\varphi}_i$ satisfy all of the desired
  properties.  Indeed, $\overline{\varphi}_i \in H^1_{\mu}(\G)$ and
  from \eqref{Eq:dilationderivative} we have
  \begin{equation}
    \label{eq:lower-bound-H10}
    \|\overline{\varphi}_i'\|_{L^2(\G)}
    = t \|\varphi'\|_{L^2(\G)}
    \ge \overline{\beta},    
  \end{equation}
  which implies (i).
  Finally, the choice of $t$, \eqref{eq:lower-bound-H10} and
  \begin{math}
    \bigl\| \bigl(\sum_{i=1}^{N-1}a_i \overline{\varphi}_i
    \bigr)' \bigr\|_{L^2(\G)}^2
    = \sum_{i=1}^{N-1}a_i^2 \|\overline{\varphi}_i'\|_{L^2(\G)}^2
  \end{math}
  show
  (iii), ending the proof.
\end{proof}

Now let $\{V_N\}$ be a sequence of linear subspaces of
$H^1(\G)$ with $ \dim(V_N) = N$ which is exhausting $H^1(\G)$ in the
sense that $$\bigcup_{N \geq 1}V_N$$ is dense in $H^1(\G)$. We recall
that for separable Hilbert spaces, such as $H^1(\G)$,
such a sequence always exists. \medskip

Our next lemma is an adaptation of~\cite[Lemma~2.1]{BaVa}.
\begin{lemma}\label{lemma: mu_n divergence}
  For any $p >2$ there holds:
  \begin{equation*}
    S_N
    :=
    \inf_{u\in V_{N-2}^\perp}\frac{\int_\G|u'|^2 + |u|^2}{
      \left(\int_\K|u|^p\right)^{2/p}}
    \to \infty, \quad \text{as } \,  N \to \infty.
  \end{equation*}
\end{lemma}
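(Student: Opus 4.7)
The plan is to argue by contradiction, exploiting the density of $\bigcup_N V_N$ in $H^1(\G)$ together with the compactness of the embedding of $H^1(\G)$ into $L^p(\K)$ (which holds because $\K$ is a compact metric graph). First I would note that, since $V_{N-2} \subset V_{N-1}$ implies $V_{N-2}^\perp \supset V_{N-1}^\perp$, the sequence $\{S_N\}$ is non-decreasing. It is therefore enough to rule out the possibility that $S_N$ stays bounded.

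Suppose by contradiction that $S_N \le C$ for some $C > 0$ and all $N$. By homogeneity, one can pick $u_N \in V_{N-2}^\perp$ with $\int_\K |u_N|^p \intd x = 1$ and
\begin{equation*}
  \int_\G \bigl(|u_N'|^2 + |u_N|^2\bigr) \intd x \le 2 C,
\end{equation*}
so $\{u_N\}$ is bounded in $H^1(\G)$. Up to a subsequence, $u_N \rightharpoonup u$ weakly in $H^1(\G)$.

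The next step is to show that $u = 0$. Fix $v \in \bigcup_{M \ge 1} V_M$, say $v \in V_{M_0}$. For $N$ large enough one has $V_{M_0} \subset V_{N-2}$, hence $\langle u_N, v \rangle_{H^1(\G)} = 0$. Passing to the limit gives $\langle u, v \rangle_{H^1(\G)} = 0$ for every $v$ in the dense subspace $\bigcup_M V_M$, and therefore $u = 0$.

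The main (and final) obstacle is to convert weak convergence into a contradiction. Because $\K$ is compact and the inclusion $H^1(\K) \hookrightarrow L^p(\K)$ is compact for any $p > 2$, the restriction map $H^1(\G) \to L^p(\K)$ is compact. Thus $u_N \to u = 0$ strongly in $L^p(\K)$, which contradicts $\int_\K |u_N|^p \intd x = 1$. This contradiction proves that $S_N \to +\infty$.
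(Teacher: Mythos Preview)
Your argument is correct and follows essentially the same route as the paper: assume boundedness, normalize so that $\|u_N\|_{L^p(\K)}=1$, extract a weakly convergent subsequence in $H^1(\G)$, use the exhausting property of $\{V_N\}$ to identify the weak limit as $0$, and then derive a contradiction from the compact embedding $H^1(\K)\hookrightarrow L^p(\K)$. The only cosmetic differences are your preliminary remark on the monotonicity of $\{S_N\}$ (not needed, but harmless) and that you test against $v\in\bigcup_M V_M$ directly, whereas the paper approximates an arbitrary $v\in H^1(\G)$ by $v_N\in V_{N-2}$; both versions implicitly use that the $V_N$ are nested.
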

\begin{proof}
  Suppose by contradiction that there exists a sequence
  $\{u_N\} \subset V_{N-2}^\perp$ such that $\|u_N\|_{L^p(\K)}=1$ and
  $\|u_N\|_{H^1(\G)}$ is bounded.  In particular,
  up to a subsequence, there exists $u \in H^1(\G)$ such that
  $u_N\rightharpoonup u$ in ${H^1(\G)}$ (and thus in $H^1(\K)$) and
  therefore $u_N\to u$ in $L^p(\K)$.  Let $v\in H^1(\G)$.
  Because $\{V_N\}$ exhausts $H^1(\G)$, there exists a sequence
  $\{v_N\} \subset H^1_{\mu}(\G)$ such that, for all $N \in \N$,
  $v_N\in V_{N-2}$ and $v_N\to v$ in $H^1(\G)$. Taking the
  scalar product in ${H^1(\G)}$ we have
  \begin{equation*}
    |\langle u_N,v\rangle|
    \le |\langle u_N,v-v_N\rangle| + |\langle u_N,v_N\rangle|
    = |\langle u_N,v-v_N\rangle|
    \le \|u_N\|_{H^1(\G)} \|v-v_N\|_{H^1(\G)}
    \xrightarrow[N\to\infty]{} 0.
  \end{equation*}
  It follows that $u_N\rightharpoonup 0=u$ in contradiction with
  $\|u_N\|_{L^p(\K)}=1$.
\end{proof}

\noindent We now define 
\begin{equation}\label{Eq: beta_n}
  \beta_N:=\left(\frac{S_N^{p/2}}{L}\right)^{1/(p-2)}
  \quad \text{where} \quad
  L = L(p) := \frac{3}{p}\max_{x>0}
  \frac{(\mu+x^2)^{p/2}}{\mu+x^p}.
\end{equation}
As an immediate consequence of Lemma \ref{lemma: mu_n divergence}, we have that $\beta_N\to \infty$. Thus if we define 
\begin{equation}\label{Eq:b_n}
  b_{\rho}^N := \inf_{u\in B_N} E_\rho(u, \G)
  \quad \text{where} \quad
  B_N := \bigl\{u\in V_{N-2}^\perp\cap H_{\mu}^1(\G) \bigm|
  \|u'\|_{L^2(\G)} = \beta_N \bigr\}
\end{equation}
we obtain that
\begin{lemma}\label{lemma:b_n divergence}
  $b_{\rho}^N\to + \infty$ as $N \to + \infty$, uniformly in
  $\rho\in [1/2,1]$.
\end{lemma}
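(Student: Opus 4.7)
The plan is to derive a pointwise lower bound for $E_\rho(u,\G)$ on $B_N$ that is uniform in $\rho \in [1/2,1]$ and diverges with $N$. First, I would use the Sobolev-type quotient defining $S_N$: any $u \in B_N$ satisfies $u \in V_{N-2}^\perp$ with $\|u'\|_{L^2(\G)}^2 = \beta_N^2$ and $\|u\|_{L^2(\G)}^2 = \mu$, so $\|u\|_{H^1(\G)}^2 = \beta_N^2 + \mu$ and consequently
\begin{equation*}
  \|u\|_{L^p(\K)}^p \le S_N^{-p/2} (\mu + \beta_N^2)^{p/2}.
\end{equation*}

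Next, the key algebraic trick is the use of the constant $L$ appearing in \eqref{Eq: beta_n}. By definition of $L$, the inequality $(\mu + x^2)^{p/2} \le \frac{pL}{3}(\mu + x^p)$ holds for every $x>0$. Applied with $x = \beta_N$, it gives
\begin{equation*}
  \frac{1}{p} \|u\|_{L^p(\K)}^p
  \le \frac{L (\mu + \beta_N^p)}{3\, S_N^{p/2}}
  = \frac{L \mu}{3\, S_N^{p/2}} + \frac{L \beta_N^p}{3\, S_N^{p/2}}.
\end{equation*}
Now the defining relation $\beta_N^{p-2} = S_N^{p/2}/L$ makes the second term collapse neatly: $L \beta_N^p / S_N^{p/2} = \beta_N^2 \cdot (L \beta_N^{p-2}/S_N^{p/2}) = \beta_N^2$, so
\begin{equation*}
  \frac{1}{p}\|u\|_{L^p(\K)}^p \le \frac{L \mu}{3\, S_N^{p/2}} + \frac{\beta_N^2}{3}.
\end{equation*}

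Since $\rho \le 1$, plugging this into $E_\rho$ yields, uniformly in $\rho \in [1/2,1]$,
\begin{equation*}
  E_\rho(u,\G)
  = \frac{\beta_N^2}{2} - \frac{\rho}{p} \|u\|_{L^p(\K)}^p
  \ge \frac{\beta_N^2}{2} - \frac{\beta_N^2}{3} - \frac{L\mu}{3\, S_N^{p/2}}
  = \frac{\beta_N^2}{6} - \frac{L\mu}{3\, S_N^{p/2}}.
\end{equation*}
Taking the infimum over $u \in B_N$ gives the same lower bound for $b_\rho^N$. By Lemma~\ref{lemma: mu_n divergence}, $S_N \to +\infty$, hence $\beta_N \to +\infty$ as well and the second term tends to $0$; therefore $b_\rho^N \to +\infty$ uniformly in $\rho$.

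The only delicate point is the choice of the constant $L$ in \eqref{Eq: beta_n}: it is tuned precisely so that, after using $(\mu+x^2)^{p/2} \le \frac{pL}{3}(\mu+x^p)$, the term $L\beta_N^p/S_N^{p/2}$ equals exactly $\beta_N^2$, leaving a strictly positive multiple $\beta_N^2/6$ of $\|u'\|_{L^2(\G)}^2$ in the lower bound for $E_\rho$. No other step requires more than the definitions and Lemma~\ref{lemma: mu_n divergence}.
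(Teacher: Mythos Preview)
Your proof is correct and follows exactly the same approach as the paper's: both use the definition of $S_N$ to bound $\|u\|_{L^p(\K)}^p$, invoke the inequality $(\mu+x^2)^{p/2}\le \tfrac{pL}{3}(\mu+x^p)$ encoded in the constant $L$, and exploit the choice $\beta_N^{p-2}=S_N^{p/2}/L$ to obtain the lower bound $\tfrac{1}{6}\beta_N^2 - \tfrac{L\mu}{3S_N^{p/2}}$. The paper's display is simply a more compressed version of the same chain of inequalities.
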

\begin{proof}
  For every $u\in B_N$ we have 
  \begin{equation*}
    \begin{split}
      E_\rho(u, \G)
      &=\frac{1}{2}\int_\G |u'|^2
        - \frac{\rho}{p}\left(\int_\K |u|^p\right)^{\frac{2}{p}\cdot\frac{p}{2}}
        \geq \frac{1}{2}\int_\G |u'|^2
        -\frac{1}{p}\left(\frac{\mu+\int_\G|u'|^2}{S_N}
        \right)^{p/2}\\
      &\ge\frac{1}{2} \|u'\|_{L^2(\G)}^2
        - \frac{L}{3S_N^{p/2}} \left(\mu + \|u'\|_{L^2(\G)}^{p}\right)\\
      &= \frac{1}{2}\beta_{N}^2-\frac{1}{3}\beta_N^{2-p}(\mu+\beta_N^p)\\
      &
        =\frac{1}{6}\beta_N^2 + o(1).
    \end{split}
  \end{equation*}
  The proof is completed by taking the infimum over $B_N$.
\end{proof}

We are finally in position to give the

\begin{proof}[Proof of Proposition \ref{Prop: inftylevels}]
  We have already proved that \eqref{Holder} and \eqref{hp coer} hold.
  Let $\{V_N\}$ with $\dim(V_N) = N$ be an exhausting
  sequence of $H^1(\G)$ and, for each $N \geq 2$,
  define the values $\beta_N$ and
  $b_{\rho}^N$ respectively by \eqref{Eq: beta_n} and
  \eqref{Eq:b_n}. By Lemma \ref{lemma: mu_n divergence} and Lemma
  \ref{lemma:b_n divergence} both sequences $\{\beta_N\}$ and
  $\{b_{\rho}^N\}$ diverge.

  Consider now a sequence of functions $\{\varphi_i\}_{i=1}^\infty$ as given by
  Lemma \ref{lemma:eigennotfunctions} taking $\beta=1$ and a set of
  $N$ functions $\{\overline{\varphi}_i\}_{i=1}^N$ given by Lemma
  \ref{lemma:eigendilationfunctions} taking
  $\overline{\beta}= 2\beta_N$ and $\overline{b}=1$.  Moreover, define
  the functions
  \begin{equation*}
    \gamma_{0,N}\colon \mathbb{S}^{N-2} \to H^1_{\mu}(\G) :
    (a_1,\dots,a_{N-1}) \mapsto \sum_{i=1}^{N-1}a_i\varphi_i,
    \hspace{2em}
    \gamma_{1,N}\colon \mathbb{S}^{N-2} \to H^1_{\mu}(\G) :
    (a_1,\dots,a_{N-1}) \mapsto \sum_{i=1}^{N-1}a_i\overline{\varphi}_i.
  \end{equation*}
  which satisfy, for every $N \geq 2$ and $a \in \mathbb{S}^{N-2}$,
  \begin{equation*}
    \begin{cases}
      \|\gamma_{0,N}(a)'\|_{L^2(\G)} = 1, \\[1\jot]
      E_\rho \bigl(\gamma_{0,N}(a), \G) \bigr) = \frac{1}{2},
    \end{cases}
    \hspace{2em}\text{and}\hspace{2em}
    \begin{cases}
      \|\gamma_{1,N}(a)'\|_{L^2(\G)}\ge 2\beta_N , \\[1\jot]
      E_\rho \bigl(\gamma_{1,N}(a), \G) \bigr) \le 1.
    \end{cases}
  \end{equation*}
  From Lemma \ref{Pierotti-Verzini} we know that the set
  \begin{equation*}
    \Gamma_N
    = \bigl\{ \gamma\in C([0,1]\times \mathbb{S}^{N-2},\, H^1_{\mu}(\G))
    \bigm|
    \forall t \in [0,1],\ \gamma(t,\cdot) \text{ is odd, }
    \gamma (0, \cdot) = \gamma_{0,N}, \text{ and }
    \gamma(1, \cdot) = \gamma_{1,N} \bigr\}
  \end{equation*}
  is not empty.

  \smallskip

  Now, we want to use Theorem~\ref{prop:MPcondition} with the choice
  $\Phi = E_\rho(\cdot, \G)$, $d = N-2$,
    $J(u)=\|u'\|_{L^2(\G)}$, $\beta = \beta_N$, and $W = V_{N-2}$.   We easily check that its
  assumptions $(H1)$ and $(H2)$
  are satisfied for any $N$ sufficiently large (uniformly in $\rho$) and thus \eqref{SMP2LL}
  also holds.  Finally, using $b_{\rho}^N\to +\infty$ as $N\to \infty$
  and~\eqref{SMP2L}, 
  we get that $c_{\rho}^N\to+\infty$ as $N\to\infty$.
\end{proof}

\section{Proof of Theorem \ref{thm: main exL}}
\label{section:strong conv}

This section is devoted to the proof of Theorem~\ref{thm: main
  exL}.  As a consequence of Proposition \ref{Prop: inftylevels}, we
may apply Theorem \ref{Objective1} to the family of functionals given
by \eqref{para func}. From Theorem \ref{Objective1} and the
considerations just after it (see in particular
\eqref{free-gradient}--\eqref{def-almost-Lagrange}),
for all $N \in \N$ large enough and for almost every $\rho \in [1/2,1]$, we deduce the existence of a
bounded sequence
$\{u_{\rho,n}^N\}_{n=1}^{\infty} \subset H_{\mu}^1(\G) $, that we
shall simply denote $\{u_n\}$, such that
\begin{equation}\label{const critL}
E_{\rho}(u_n, \G) \to c_{\rho}^N
\end{equation}
and
\begin{equation}\label{const crit}
  E'_{\rho}(u_n, \G) +  \lambda_n (u_n,\cdot) \to 0
  \quad \text{in the dual of }  H^1_{\mu}(\mathcal{G}),
\end{equation}
where
\begin{equation}\label{lambda}
  \lambda_n:= -\frac{1}{\mu} E'_\rho(u_n, \G)[u_n].
\end{equation}
Finally, there exists a sequence $\{\zeta_n\}\subset \mathbb{R}^+$ with $\zeta_n\to 0^+$ such that, if the inequality
\begin{eqnarray}\label{L-Hess crit}
  \int_{\G} |\varphi'|^2 + \left( \lambda_n
  - (p-1)\rho \kappa(x)|u_n|^{p-2}\right) \varphi^2 \intd x
  = E''_\rho(u_n, \G)[\varphi, \varphi]
  + \lambda_n \|\varphi\|_{L^2(\mathcal{G})}^2
  < -\zeta_n \|\varphi\|_{H^1(\G)}^2
\end{eqnarray}
holds for any $\varphi \in W_n \setminus \{0\}$ in a subspace $W_n$ of
$T_{u_n} H^1_{\mu}(\G)$, then the dimension of $W_n$ is at most
$N$.

\smallskip

Since $\{u_n\} \subset H^1(\G)$ is bounded, passing to a
subsequence we may assume that there exists
$u_\rho^N\in H^1(\mathcal{G})$ such that
\begin{align}
  u_{n}\rightharpoonup u_{\rho}^N
  &\quad\text{in } H^1(\mathcal{G}),\label{3-5-1}\\
  u_n\to u_{\rho}^N
  &\quad\text{in } L_{{\rm{loc}}}^r(\mathcal{G})
    \text{ for all } r \ge 2.
    \label{3-5-2}
\end{align}
Observe also that, since $\{u_n\} \subset H^1(\G)$ is a bounded
sequence, it follows from \eqref{lambda} that
$\{\lambda_n\} \subset \R$ is bounded.  As before, passing to a subsequence,
there exists $\lambda_\rho^N\in \mathbb{R}$ such that
$\lim\limits_{n\to+\infty}\lambda_n=\lambda_{\rho}^N$.

\smallskip

The sequences $\{\lambda_{\rho}^N\}_{N =1}^{\infty} \subset \R$ and
$\{u_{\rho}^N \}_{N =1}^{\infty} \subset H^1_{\mu}(\G)$ are the
candidates to prove Theorem \ref{thm: main exL}. We begin by verifying
that the limit $u_\rho^N \in H^1(\G)$ solves \eqref{Eq:pb rho}. Indeed, using
(\ref{const crit}) and the fact that
$\lim\limits_{n\to+\infty}\lambda_n=\lambda_{\rho}^N$, we get
\begin{align}
  0
  &= \lim_{n\to\infty} \bigl( E_{\rho}'(u_n, \G)
    + \lambda_n (u_n, \cdot) \bigr) [\eta] \nonumber\\ 
  &= \lim_{n\to\infty}\left[
    \int_\G u_n'\eta' + \lambda_n \int_\G u_n\eta
    - \rho \int_\K|u_n|^{p-2}u_n\eta\right]\nonumber\\
  &= \int_\G (u_{\rho}^N)'\eta' + \lambda_\rho^N\int_\G u_\rho^N\eta
    - \rho\int_\K|u_\rho^N|^{p-2}u_\rho^N\eta
    \label{Eq:weakurho}
\end{align}
for every $\eta\in H^1(\G)$.
We have thus proved the claim.

We now focus on proving the strong convergence of the sequences $\{u_n\} \subset H^1(\G)$ to ensure that the limits $u_\rho^N$ belong to the mass constraint $H^1_{\mu}(\G)$. 

\begin{proposition}\label{lemma:sconvergenceif}
  The following convergence holds:
  \begin{equation*}
    \int_\G |(u_n-u_\rho^N)'|^2
    + \lambda_\rho^N\int_\G |u_n- u_\rho^N|^2
    \xrightarrow[n \to \infty]{} 0 .
  \end{equation*}
  In particular, if $\lambda_{\rho}^N >0$, the sequence $\{u_n\}$
  converges strongly in $H^1(\G)$.
\end{proposition}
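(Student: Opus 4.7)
The strategy is to test the asymptotic Euler--Lagrange relation \eqref{const crit} against $\eta_n := u_n - u_\rho^N \in H^1(\G)$ and compare with the equation \eqref{Eq:weakurho} satisfied by $u_\rho^N$, tested against the same $\eta_n$. Since $\{u_n\}$ is bounded in $H^1(\G)$, so is $\{\eta_n\}$; testing \eqref{const crit} gives
\begin{equation*}
  \int_\G u_n'\, \eta_n' + \lambda_n \int_\G u_n\, \eta_n
  - \rho \int_\K |u_n|^{p-2} u_n\, \eta_n = o(1),
\end{equation*}
while \eqref{Eq:weakurho} with $\eta = \eta_n$ gives the exact identity
\begin{equation*}
  \int_\G (u_\rho^N)'\, \eta_n' + \lambda_\rho^N \int_\G u_\rho^N\, \eta_n
  - \rho \int_\K |u_\rho^N|^{p-2} u_\rho^N\, \eta_n = 0.
\end{equation*}
Subtracting the two relations yields
\begin{equation*}
  \int_\G |\eta_n'|^2
  + \Bigl( \lambda_n \int_\G u_n \eta_n - \lambda_\rho^N \int_\G u_\rho^N \eta_n \Bigr)
  - \rho \int_\K \bigl(|u_n|^{p-2} u_n - |u_\rho^N|^{p-2} u_\rho^N\bigr) \eta_n = o(1).
\end{equation*}

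The key point is to dispose of the last two bracketed quantities. For the nonlinear term, since $\K$ is compact, \eqref{3-5-2} yields $u_n \to u_\rho^N$ strongly in $L^p(\K)$; combined with boundedness of $\{u_n\}$ in $L^p(\K)$ and H\"older's inequality, this term is bounded in absolute value by $C\|\eta_n\|_{L^p(\K)} \to 0$. This is where the compactness of $\K$ and the localization of the nonlinearity are essential, and it is the main (indeed, essentially the only nontrivial) obstacle in the argument. For the $\lambda$-terms, I would rewrite
\begin{equation*}
  \lambda_n \int_\G u_n \eta_n - \lambda_\rho^N \int_\G u_\rho^N \eta_n
  = \lambda_n \int_\G |\eta_n|^2 + (\lambda_n - \lambda_\rho^N) \int_\G u_\rho^N \eta_n,
\end{equation*}
where the second summand vanishes since $\lambda_n \to \lambda_\rho^N$ and $\int_\G u_\rho^N \eta_n \to 0$ by weak $L^2$-convergence (a consequence of \eqref{3-5-1}). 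Because $\{\eta_n\}$ is bounded in $L^2(\G)$ and $\lambda_n \to \lambda_\rho^N$, replacing the leading coefficient $\lambda_n$ by $\lambda_\rho^N$ only introduces an error of order $|\lambda_n - \lambda_\rho^N|\cdot\|\eta_n\|_{L^2(\G)}^2 = o(1)$, and the first claim of the proposition follows.

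For the second statement, assume $\lambda_\rho^N > 0$. Then both $\int_\G |\eta_n'|^2$ and $\lambda_\rho^N \int_\G |\eta_n|^2$ are nonnegative, so their sum tending to zero forces each to tend to zero individually. This is precisely $\eta_n \to 0$ in $H^1(\G)$, i.e.\ strong convergence of $\{u_n\}$ to $u_\rho^N$. Note that the case $\lambda_\rho^N \le 0$ cannot be concluded from this identity alone (since the $L^2$-term could compensate the gradient term), which explains why the rest of the paper must independently rule out $\lambda_\rho^N \le 0$ before invoking this proposition.
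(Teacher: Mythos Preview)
Your proof is correct and follows essentially the same approach as the paper: test \eqref{const crit} and \eqref{Eq:weakurho} against $\eta_n = u_n - u_\rho^N$, subtract, kill the nonlinear term via the strong $L^p(\K)$-convergence from \eqref{3-5-2}, and absorb the $(\lambda_n - \lambda_\rho^N)$ discrepancy using boundedness of $\{\eta_n\}$ in $L^2(\G)$. The only cosmetic difference is that the paper first replaces $\lambda_n$ by $\lambda_\rho^N$ in \eqref{const crit} (valid for general test functions $\eta$) before specializing to $\eta = \eta_n$, whereas you specialize first and then rearrange the $\lambda$-terms; both manipulations are equivalent.
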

\begin{proof}
  First, rewriting \eqref{const crit} as follows:
  \begin{align*}
    o(1) \|\eta\|_{H^1(\G)}
    &= \int_\G u_n'\eta' - \rho\int_\K|u_n|^{p-2}u_n\eta
      + \lambda_n\int_\G u_n\eta\\
    &= \int_\G u_n'\eta'-\rho\int_\K|u_n|^{p-2}u_n\eta
      + \lambda_\rho^N\int_\G u_n\eta
      + (\lambda_n-\lambda_\rho^N)\int_\G u_n\eta ,
  \end{align*}
  we get
  \begin{equation}\label{Eq:umlimit}
    \int_\G u_n'\eta' - \rho\int_\K|u_n|^{p-2}u_n\eta
    + \lambda_\rho^N\int_\G u_n\eta
    = o(1) \|\eta\|_{H^1(\G)}.
  \end{equation}
  Now, taking the difference between \eqref{Eq:umlimit} and
  \eqref{Eq:weakurho}, choosing $\eta = \eta_n := u_n-u_\rho^N$
  and taking into account \eqref{3-5-2} and  that $\{\eta_n\}$ is bounded, we obtain
  \begin{align*}
    o(1) = o(1)\|\eta_n\|_{H^1(\G)}
    &= \int_\G \bigl(u_n' - (u_{\rho}^N)' \bigr)\eta'_n
      - \rho\int_\K \bigl(|u_n|^{p-2}u_n-|u_\rho^N|^{p-2}u_\rho^N\bigr)\eta_n
      + \lambda_\rho^N \int_\G (u_n- u_\rho^N)\eta_n\\
    &= \int_\G \bigl(u_n'-(u_\rho^N)' \bigr)\eta'_n
      + \lambda_\rho^N\int_\G (u_n- u_\rho^N)\eta_n
      + o(1) \|\eta_n\|_{H^1(\G)}\\
    &= \int_\G \bigl|(u_n-u_\rho^N)'\bigr|^2
      + \lambda_\rho^N\int_\G |u_n- u_\rho^N|^2
      + o(1),
  \end{align*}
  which proves the claim.
\end{proof}

In order to apply Proposition \ref{lemma:sconvergenceif} we need to show that the assumption $\lambda_{\rho}^N >0$ holds. We will do this in two steps. In first place, we will show that $\lambda_{\rho}^N < 0$ is not possible by making use of Lemma \ref{lemma:Louis1}. The following result will aid to check that its assumptions hold.

\begin{lemma}\label{lemma:L-eigenvalue}
  For any $\lambda <0$ and $d \in \N$, there exists a subspace $Y$ of
  $H^1(\mathcal{G})$ with $\dim (Y)= d$ such that
  \begin{equation*}
    E''_{\rho}(u_n, \G)[w,w] + \lambda \|w\|_{L^2(\G)}^2
    = \int_{\mathcal{G}}|w'|^2 \intd x
    + \lambda \int_{\mathcal{G}}|w|^2 \intd x
    \leq \frac{\lambda}{2} \|w\|_{H^1(\G)}^2,
    \qquad \forall \, w \in Y.
  \end{equation*}
\end{lemma}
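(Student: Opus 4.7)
The key observation is that if $w \in H^1(\G)$ has support disjoint from the compact core $\K$, then $E''_\rho(u_n, \G)[w,w]$ reduces to $\int_\G |w'|^2$, because the Hessian of the nonlinear part is localized on $\K$. In particular the desired inequality does not involve $u_n$ at all for such a $w$, and after elementary rearrangement it becomes the Rayleigh-type bound
\[
\frac{\int_\G |w'|^2}{\int_\G |w|^2} \le \frac{|\lambda|}{2+|\lambda|}.
\]
Hence it suffices to exhibit a $d$-dimensional subspace $Y$ of $H^1(\G)$, independent of $n$, consisting of functions supported away from $\K$ and all having a Rayleigh quotient below this threshold.

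To produce $Y$, I exploit Assumption \eqref{HG}: fix a half-line of $\G$, identified with $\intervalco{0, +\infty}$, whose single vertex (at $0$) connects it to the rest of the graph. Pick $\varphi \in C^\infty_c(\intervaloo{0, 1})$ with $\|\varphi\|_{L^2(\R)}^2 = 1$ and, for a parameter $t \ge 1$ to be chosen, define on the half-line
\[
\psi_i(x) := t^{-1/2}\, \varphi \bigl( (x - (i-1)t)/t \bigr), \qquad i = 1, \dots, d,
\]
extended by $0$ to the rest of $\G$. Each $\psi_i$ belongs to $H^1(\G)$ (the extension is continuous because $\varphi$ vanishes in a neighborhood of $0$ and $1$), its support is contained in $\bigintervaloo{(i-1)t, it}$, and in particular is disjoint from $\K$; moreover the $\psi_i$ have pairwise disjoint supports.

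Set $Y := \linspan\{\psi_1, \dots, \psi_d\}$, which has dimension $d$. A change of variables gives $\|\psi_i\|_{L^2(\G)}^2 = 1$ and $\|\psi_i'\|_{L^2(\G)}^2 = t^{-2} \|\varphi'\|_{L^2(\R)}^2$. For an arbitrary $w = \sum_{i=1}^{d} a_i \psi_i \in Y$, the disjointness of supports yields
\[
\frac{\int_\G |w'|^2}{\int_\G |w|^2} \;=\; t^{-2}\, \|\varphi'\|_{L^2(\R)}^2,
\]
so choosing $t$ large enough (depending only on $\lambda$ and $\varphi$, hence independent of $n$) brings this ratio below $|\lambda|/(2+|\lambda|)$, which is exactly what is required. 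The main conceptual point—and the only step that is not mechanical—is the use of a half-line to annihilate the nonlinear contribution to $E''_\rho(u_n, \G)$; this is also what forces the argument to rely on the noncompactness of $\G$, consistently with Remark~\ref{Gcompact}.
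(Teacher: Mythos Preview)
Your proof is correct and follows essentially the same approach as the paper: both construct $Y$ by placing dilated, disjointly supported bump functions on a half-line so that the nonlinear part of $E''_\rho$ vanishes and the Rayleigh quotient $\|w'\|_{L^2}^2/\|w\|_{L^2}^2$ can be made arbitrarily small. The only cosmetic difference is that you first rewrite the target inequality as the Rayleigh bound $\|w'\|_{L^2}^2/\|w\|_{L^2}^2 \le |\lambda|/(2+|\lambda|)$ and then scale with a large parameter $t$, whereas the paper scales with a small parameter $\tau$ (your $t$ is their $1/\tau$) and verifies the inequality directly.
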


\begin{proof}
	We proceed similarly to the proof of
    Lemma~\ref{lemma:eigennotfunctions}.
	Take $\varphi \in C^{\infty}_{\text{c}}(\mathbb{R})$ with
	$\supp \varphi \subset \intervaloo{0, 1}$ and such that
	$\int_0^{+\infty} |\varphi|^2 \intd x=1$.
	Viewing $\varphi$ as a function in $H^1(\G)$ whose support
	is contained in a half-line which we identify with
	$\intervalco{0, \infty}$, we define (using the notation of \eqref{Eq:dilation})
	\begin{equation*}
		\varphi_1 := \varphi^\tau,
	\end{equation*}
	where $\tau > 0$ is taken small enough so that
	\begin{equation}
		\label{cond_tau}
		\tau^2 \| \varphi' \|_{L^2(\R)^2} + \lambda
		\le \frac{\lambda}{2} (\tau^2 \| \varphi' \|_{L^2(\R)^2} + 1).
	\end{equation}
	One has that
	\begin{equation*}
		\| \varphi_1 \|_{L^2(\G)} = 1, \qquad
		\| \varphi_1' \|_{L^2(\G)} = \tau^2 \| \varphi' \|_{L^2(\G)}.
	\end{equation*}
	Define now, for $i \geq 2$,
	\begin{equation*}
		\varphi_i(x)
		:= \varphi_1\left(x - \frac{i-1}{\tau} \right).
	\end{equation*}
	Since $\supp(\varphi_i) \subset \bigintervaloo{\frac{i-1}{\tau},
	\frac{i}{\tau}}$, all the $\varphi_i$ have disjoint supports.
	Let $Y\subset H^1(\G)$ be the subspace generated by $\varphi_1, \dotsc, \varphi_d$.
	Any element $w \in Y$ writes as
	\begin{equation*}
		w := \sum_{i=1}^{d} \theta_i\varphi_i.
	\end{equation*}
	where $\theta_1, \dots, \theta_{d} \in \R$.
	By direct calculations we have
	\begin{equation*}
          \begin{split}
            \int_{\mathcal{G}}|w'|^2 \intd x
            + \lambda \int_{\mathcal{G}}|w|^2 \intd x
            &= \tau^2 \Biggl( \sum_{i=1}^{d} \theta_i^2
              \|\varphi'\|_{L^2(\R)}^2 \Biggr)
              + \lambda \Biggl( \sum_{i=1}^{d} \theta_i^2 \Biggr)\\
            &= (\tau^2 \|\varphi'\|_{L^2(\R)}^2 + \lambda) \sum_{i=1}^{d} \theta_i^2.
          \end{split}
	\end{equation*}
	Similarly, $\|w\|_{H^1(\G)}^2 = (\tau^2 \|\varphi'\|_{L^2(\R)}^2 + 1) \sum_{i=1}^{d} \theta_i^2$.
	Therefore, \eqref{cond_tau} implies that
	\begin{equation*}
		\int_{\mathcal{G}}|w'|^2 \intd x
		+ \lambda \int_{\mathcal{G}}|w|^2 \intd x
		= (\tau^2 \|\varphi'\|_{L^2(\R)}^2 + \lambda) \sum_{i=1}^{d} \theta_i^2
		\le \frac{\lambda}{2}  (\tau^2 \|\varphi'\|_{L^2(\R)}^2 + 1) \sum_{i=1}^{d} \theta_i^2
		= \frac{\lambda}{2} \|w\|_{H^1(\G)}^2.
	\end{equation*}
  The fact that $w$ vanishes outside the half-line justifies the
  equality in the claim,
  ending the proof.
\end{proof}

Observe that the codimension of $T_{u_n} H^1_{\mu}(\G)$ in
$H^1(\G)$ is one. Thus, if inequality \eqref{L-Hess crit} holds
for every $\varphi \in W_n \setminus \{0\}$ for a subspace $W_n$ of
$H^1(\G)$, then the dimension of $W_n$ is at most $N+1$.
Let $\lambda < 0$.  Let $Y$ be the space of dimension $d = N+2$
provided by Lemma~\ref{lemma:L-eigenvalue}.
We may thus apply Lemma \ref{lemma:Louis1} to obtain that
\begin{equation}\label{Eq:positivelambda}
  \lambda_\rho^N \ge 0.
\end{equation}
Combining Proposition~\ref{lemma:sconvergenceif} and
\eqref{Eq:positivelambda}, we get that
\begin{equation*}
  \int_\G |(u_n-u_\rho^N)'|^2\to 0.   
\end{equation*}
Using in addition \eqref{3-5-2} and recalling that the
nonlinearity acts only on the compact core $\K$, we obtain
that $E_{\rho}(u_n, \G) \to E_{\rho}(u_{\rho}^N, \G)$. In particular,
in view of \eqref{const critL}, it follows that
\begin{equation}\label{keep_level}
E_{\rho}(u_{\rho}^N, \G) = c_{\rho}^N.
\end{equation}
We will now prove  that $\lambda_{\rho}^N=0$ is not possible either, assuming that $N \in \N$ is large enough uniformly in
  $\rho \in \intervalcc{1/2, 1}$. It is here that we will use
what has been developed in Section \ref{Preliminaries_II}: assume by
contradiction that there exists a subsequence
  $\{u^{N_k}_{\rho_k}\}_{k=1}^\infty$, with $N_k \to +\infty$ and
  $\rho_k \in \intervalcc{1/2, 1}$ for all $k$,
  such that the weak limits
  $u_{\rho_k}^{N_k} \in H^1(\G)$ have an associated
  $\lambda_{\rho_k}^{N_k}$ which
is $0$. By Proposition \ref{Prop: inftylevels},
$c_{\rho}^N \xrightarrow[N\to \infty]{} +\infty$
uniformly w.r.t.\ $\rho$, and thus we have from
\eqref{keep_level} that
$E_{\rho_k}(u_{\rho_k}^{N_k}, \G) \to + \infty$ as $k \to \infty$.
This is
in contradiction with Proposition~\ref{forbid} since
$\{u_{\rho_k}^{N_k}\}_{k=1}^\infty \subset H^1_{\mu}(\G)$.
In conclusion, we have $\lambda_\rho^N>0$.

\bigskip

Finally let us show that the Morse index $\morse(u_{\rho}^N)$ of
$u_{\rho}^N$ as a solution to \eqref{Eq:pb rho} satisfies
$\morse(u_{\rho}^N) \leq N +1$.  We recall that the Morse index of a
solution $u \in H^1(\G)$ of \eqref{1.2LL} is defined as the maximal
dimension of a subspace $W \subset H^1(\G)$ such that
$Q(\varphi;u, \G) <0$ for all $\varphi \in W \setminus \{0\}$, where
\begin{equation*}
  Q(\varphi;u, \G)
  := \int_{\G} |\varphi'|^2
  + \bigl(\lambda- \kappa(x)(p-1)\rho|u|^{p-2} \bigr) \varphi^2 \intd x.
\end{equation*}
We also note the relationship between the Morse index of a solution to \eqref{1.2LL} and the Morse index as a constrained critical point (refer to Definition \ref{def: app morse}) via the equality
\begin{align}\label{egalite}
  D^2 E_{\rho}(u_{\rho}^N, \G)[w,w]
  &:= E''_{\rho}(u_{\rho}^N, \G)[w,w] +\lambda_{\rho}^N (w,w)
    \nonumber \\
  &= \int_{\G} \left[ |w'|^2 + 
    \bigl( \lambda_{\rho}^N -(p-1)\kappa(x) |u_{\rho}^N|^{p-2}\bigr)
    w^2\right]  \intd x,
    \quad \text{for all } w \in H^1(\G).
\end{align}
Since $u_{\rho,n}^N \to u_{\rho}^N$ as $n \to \infty$, we know
from Remark~\ref{strong-convergence} that the Morse index of
$u_{\rho}^N \in H^1_{\mu}(\G)$ as a constrained critical point is less
than $N$. In view of \eqref{egalite} and of the fact that
$H_{\mu}^1(\G)$ is of codimension 1 in $H^1(\G)$ we deduce
\begin{equation}\label{Eq:good morse index}
  \morse(u_{\rho}^N) \leq N+1.
\end{equation}
Summarizing what has been observed so far we can give the

\begin{proof}[Proof of Theorem \ref{thm: main exL}]
  For any $\mu >0$ and any $N \in \N$ sufficiently large, we have
  shown that the particular bounded Palais-Smale sequence, satisfying
  \eqref{const critL}--\eqref{L-Hess crit}, provided for almost every
  $\rho \in [1/2, 1]$ by the application of Theorem \ref{Objective1}
  is converging. This leads to the existence of sequence of couples
  $\{(\lambda_\rho^N,u_\rho^N)\} \subset (0, + \infty) \times
  H^1_{\mu}(\G)$ which are solutions to \eqref{Eq:pb rho}. We also
  have by \eqref{keep_level} that
  $E(u_{\rho}^N, \G) = c_{\rho}^N \to + \infty$.  The estimate
  \eqref{Eq:good morse index} completes the proof.
\end{proof}


\section{Proof of Theorem \ref{thm: Whole ex}}\label{Proof_Th1}

Let $\mu >0$ and $N \in \N$ be sufficiently large.  By Theorem
\ref{thm: main exL}, it is possible to choose a sequence
$\rho_n \to 1^-$, and a corresponding sequence of critical points
$u_{\rho_n}^N \in H^1_\mu(\G)$ of $E_{\rho_n}(\cdot\,, \G)$
constrained to $H^1_\mu(\G)$, at the level $c_{\rho_n}^N$ and having a
Morse index $\morse(u_{\rho_n}^N) \le N+1$.  Additionally, the
Lagrange multipliers satisfy $\lambda_{\rho_n}^N >0$.

\smallskip

To prove Theorem \ref{thm: Whole ex}, it clearly suffices to show that
$\{u_{\rho_n}^N\} \subset H^1_{\mu}(\G)$ converges. For this the key
point is to show that $\{u_{\rho_n}^N\} \subset H^1(\G)$ is bounded.
The monotonicity of $c_{\rho}^N$, as a function of
$\rho \in \left[1/2, 1 \right]$ implies that $\{c_{\rho_n}^N\}$ is
bounded as it belongs to $\intervalcc{c_1^N, c_{1/2}^N}$
with $c_1^N, c_{1/2}^N \in \R$ (see Remark \ref{c_rho_N_finite}).
In addition, since, thanks to the Kirchhoff boundary
condition
\begin{equation*}
  \int_{\G} \bigl|(u_{\rho_n}^N)'\bigr|^2
  + \lambda_{\rho_n}^N \bigl(u_{\rho_n}^N\bigr)^2 \intd x
  = \rho_n \int_{\mathcal{K}} \bigl|u_{\rho_n}^N\bigr|^p\intd x,  
\end{equation*}
it follows that
\begin{equation*}
  c_{\rho_n}^N
  = E_{\rho_n}(u_{\rho_n}^N, \G)
  = \left( \frac12-\frac1p\right) \int_{\G} |(u_{\rho_n}^N)'|^2\intd x
  - \frac{\lambda_{\rho_n}^N \mu}{p}.  
\end{equation*}
Therefore
\begin{equation*}
\left( \frac12-\frac1p\right) \int_{\G} |(u_{\rho_n}^N)'|^2\intd x = c_{\rho_n}^N + \frac{\lambda_{\rho_n}^N  \mu}p
\end{equation*}
and thus, if
$\{\lambda_{\rho_n}^N\} \subset \intervaloo{0, +\infty}$ is
bounded, then $\{u_{\rho_n}^N\} \subset H^1(\G)$ is bounded as
well. At this point to conclude the proof of Theorem \ref{thm: Whole
  ex} we just need to make use of the following result which is
\cite[Corollary~1.4]{CaGaJeTr} adapted to our notation.

\begin{lemma}\label{info-norms}
  Let $\G$ be a metric graph satisfying Assumption \eqref{HG}, and
  $p>6$. Assume that $(\rho_n) \subseteq \intervalcc{\frac{1}{2}, 1}$
  is a sequence converging to $1$. Let
  $\{(\lambda_n, u_n)\} \subseteq \R \times H^1(\G)$ be a
    sequence of solutions to
  \begin{equation*}
    \begin{cases}
      -u''+\lambda u= \rho\kappa(x)|u|^{p-2}u
      &\text{on every edge}~ \edge \in \mathcal{E},\\[1.5\jot]
      \displaystyle
      \sum\limits_{\edge \incident \vv}u_{\edge}'(\vv)=0
      &\text{at every vertex } \vv \in \mathcal{V},
    \end{cases}
  \end{equation*}
  and satisfy additionally, for some $\mu>0$,
  \begin{equation*}
    \int_{\G}|u_n|^2 \intd x = \mu,
    \quad \text{for all } n \in \N
  \end{equation*}
  and whose Morse indices $\morse(u_n)$ are bounded.  Then,
  the sequence $\{\lambda_n\} \subset \R$ is bounded from above.
\end{lemma}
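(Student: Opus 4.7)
The plan is a blow-up argument by contradiction. Suppose that, passing to a subsequence, $\lambda_n \to +\infty$. The idea is to rescale the $u_n$'s so that the natural length scale $\lambda_n^{-1/2}$ becomes unity, extract nontrivial limits on simpler ``limit graphs'', and derive a contradiction from the $L^2$-mass, which the condition $p>6$ forces to diverge under the rescaling.

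Set $\varepsilon_n := \lambda_n^{-1/2} \to 0^+$ and define
\[
  v_n(y) := \varepsilon_n^{2/(p-2)} u_n(\varepsilon_n y),
\]
which solves $-v_n'' + v_n = \rho_n \tilde\kappa_n(y) |v_n|^{p-2} v_n$ on the rescaled metric graph $\G_n := \G/\varepsilon_n$ with Kirchhoff conditions at every vertex, where $\tilde\kappa_n$ is the characteristic function of the rescaled compact core $\K_n := \K/\varepsilon_n$. Since all bounded edges of $\G_n$ have length $\ell_e/\varepsilon_n \to +\infty$, near any chosen point $\G_n$ locally resembles either $\R$, a half-line, or a star graph (depending on whether one is near the interior of an edge or near a vertex). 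A direct change of variables yields
\[
  \|v_n\|_{L^2(\G_n)}^2
  = \varepsilon_n^{4/(p-2)-1} \mu
  = \lambda_n^{(p-6)/(2(p-2))} \mu
  \xrightarrow[n\to\infty]{} +\infty,
\]
since $p > 6$ makes the exponent strictly positive.

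To reach a contradiction, one selects finitely many concentration points $y_n^{(k)} \in \G_n$ at which $|v_n|$ is bounded below and, for each, extracts a locally uniform limit $v_\infty^{(k)}$ of $v_n(\,\cdot\, + y_n^{(k)})$. Each $v_\infty^{(k)}$ is a nontrivial bounded solution of $-v'' + v = |v|^{p-2}v$ on the corresponding limit domain (the full line, a half-line, or a star graph with Kirchhoff conditions), and the hypothesis $\morse(u_n) \leq M$, being preserved by the rescaling via the scale-covariance of the quadratic form $Q(\varphi;u_n, \G) = \int |\varphi'|^2 + (\lambda_n - (p-1)\rho_n \kappa |u_n|^{p-2}) \varphi^2$, forces both the number of such bubbles and the Morse index of each $v_\infty^{(k)}$ to be uniformly bounded. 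A classification result for finite Morse index solutions of $-v'' + v = |v|^{p-2}v$ on each admissible limit domain then gives $v_\infty^{(k)} \in L^2$ with a uniform upper bound on $\|v_\infty^{(k)}\|_{L^2}^2$. Summing these finitely many contributions produces an a priori upper bound on $\|v_n\|_{L^2(\G_n)}^2$, contradicting the divergence displayed above.

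The main obstacle lies in the rigorous implementation of this bubble decomposition on a metric graph. First, the topology of the limit domain depends on which vertex (if any) a concentration point approaches, and Kirchhoff conditions must be carefully propagated in the limit. Second, the transfer of the Morse-index bound to each limit profile is delicate: it relies on a localization argument that isolates bubbles while ensuring that a negative direction for the limit quadratic form lifts to a negative direction for $Q(\cdot\,;u_n,\G)$. These are precisely the technical points addressed in the blow-up analysis of \cite{CaGaJeTr}.
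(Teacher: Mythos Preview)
The paper does not give its own proof of this lemma: it is simply quoted as \cite[Corollary~1.4]{CaGaJeTr}, and all the substance of the argument is deferred to that reference. Your sketch of the blow-up/rescaling strategy (rescale by $\lambda_n^{-1/2}$, observe that the rescaled $L^2$-mass diverges because $p>6$, extract nontrivial limit profiles on star-type limit graphs, and transfer the Morse-index bound to control the number and nature of the bubbles) is precisely the mechanism underlying the result in \cite{CaGaJeTr}, and you correctly flag that the delicate steps---the geometry of the limit graph and the lifting of negative directions for the quadratic form---are exactly what that paper supplies. In that sense your proposal is fully aligned with the paper's approach, and in fact more informative than the paper itself, which provides no argument at all beyond the citation.
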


\medskip

{\small \noindent Acknowledgements:
This work has been carried out in the framework of the Project NQG (ANR-23-CE40-0005-01), funded by the French National Research Agency (ANR).
P. Carrillo, D. Galant and L. Jeanjean thank the ANR for its support. D. Galant is an F.R.S.-FNRS Research Fellow.}

\medskip

{\small \noindent Statements and Declarations: The authors have no relevant financial or non-financial interests to disclose.}

{\small \noindent Data availability: Data sharing is not applicable to this article as no datasets were generated or analysed during the current study.}

\renewcommand{\bibname}{References}
\bibliographystyle{plain}
\bibliography{multiplicity_loc_nonlin}
\vspace{0.25cm}

\end{document}